\newtheorem{theo}{Theorem}[section]
\newtheorem{prop}[theo]{Proposition}
\newtheorem{lem}[theo]{Lemma}
\newtheorem{exam}[theo]{Example}
\newtheorem{rem}[theo]{Remark}
\newtheorem{cla}[theo]{Claim}
\newtheorem{defi}[theo]{Definition}
\newcommand{\bp}{\begin{proof}}
\newcommand{\ep}{\end{proof}}
\begin{document}
 \setlength{\baselineskip}{13pt} \pagestyle{myheadings}

 \title{
 {A note on the deformations of almost complex structures on closed four-manifolds}
 \thanks{
 Supported by NSFC (China) Grants 11471145, 11401514 (Tan), 11371309 (Wang),
 11426195 (Zhou).}
 }
 \author{{\large Qiang Tan\thanks {E-mail:
 tanqiang1986@hotmail.com }, Hongyu Wang, Jiuru Zhou}\\
 }
 \date{}
 \maketitle
 \noindent {\bf Abstract:}
  In this paper, we calculate the dimension of the $J$-anti-invariant cohomology subgroup $H_J^-$ on $\mathbb{T}^4$.
  Inspired by the concrete example, $\mathbb{T}^4$, we get that:
  On a closed symplectic $4$-dimensional manifold $(M, \omega)$, $h^-_J=0$ for generic $\omega$-compatible almost complex structures.
 \\

 \noindent {{\bf AMS Classification (2000):} 53C55, 53D05.}\\

 \noindent {{\bf Keywords:} almost K\"{a}hler four-manifold, deformations of almost complex structures,
   dimension of $J$-anti-invariant cohomology.}

 \section{Notations and main result}

  Let $M$ be a closed oriented smooth $2n$-manifold.
  An almost complex structure on $M$ is a differentiable endomorphism on the tangent bundle
  $$ J: TM\rightarrow TM\,\,\,{\rm with}\,\,\,J^2=-id.$$
  Suppose $(M,J)$ is a closed almost complex manifold.
  One can construct a $J$-invariant Riemannian metric $g$ on $M$.
  Such a metric $g$ is called an almost Hermitian metric for $(M,J)$.
  We must point out that the $J$-invariant Riemannian metric always exists,
  for example, we can construct $g$ by
  $$g(\cdot,\cdot)=\frac{1}{2}(h(\cdot,\cdot)+h(J\cdot,J\cdot))$$
  for any Riemannian metric $h(\cdot,\cdot)$.
  This then in turn gives a $J$-compatible non-degenerate 2-form $F$ by $F(X,Y)=g(JX,Y)$,
  called the fundamental 2-form.
  Such a quadruple $(M,g,J,F)$ is called a closed almost Hermitian manifold.
  Thus an almost Hermitian structure on $M$ is a triple $(g,J,F)$.
  If $dF=0$, then $F$ will be written as $\omega$ and $(M,g,J,\omega)$ is called an almost K\"{a}hler manifold.

  \vskip 6pt

  Note that $J$ acts on the space $\Omega^2$ of 2-forms on $M$ as an involution by
  \begin{align}
  \alpha \longmapsto \alpha(J\cdot,J\cdot), \quad \alpha\in\Omega^2(M).
  \end{align}
  This gives the $J$-invariant, $J$-anti-invariant decomposition of 2-forms:
  \begin{align}
  \Omega^2 = \Omega^+_J \oplus \Omega^-_J, \quad \alpha = \alpha_J^+ + \alpha_J^- %
  \end{align}
  as well as the splitting of corresponding vector bundles
  \begin{align}\label{splitting of bundles}
  {\Lambda}^2={\Lambda}_J^+ \oplus {\Lambda}_J^-.
  \end{align}

  \begin{defi}
  Let $\mathcal Z^2$ denote the space of closed 2-forms on $M$ and define
  \begin{align*}
  \mathcal Z_J^+ \triangleq\mathcal Z^2 \cap \Omega_J^+, \quad
  \mathcal Z_J^- \triangleq \mathcal Z^2 \cap \Omega_J^-.
  \end{align*}
  \end{defi}

  It is well known that, when $J$ is integrable,
   $\beta\in \mathcal Z_J^-$ if and only if $J\beta\in \mathcal Z_J^-$.
  Conversely, if $(M,J)$ is a connected almost complex $4$-manifold and
  there exists nonzero $\beta\in \mathcal Z_J^-$ such that $J\beta\in \mathcal Z_J^-$,
  then $J$ is integrable (see \cite{Sa}).

  For an almost complex manifold $(M,J)$,
  T.-J. Li and W. Zhang \cite{LiZh} introduced subgroups, $H_{J}^{+}$ and $H_{J}^{-}$,
  of the real degree 2 de Rham cohomology group $H^2(M;\mathbb R)$, as the sets of cohomology classes
  which can be represented by $J$-invariant and $J$-anti-invariant real $2$-forms, respectively.

  \begin{defi}\label{definition of J anti}{\rm (cf. \cite{DLZ1,LiZh}) }
   Define the $J$-invariant and $J$-anti-invariant cohomology subgroups $H_J^{\pm}$ by
  \begin{align*}
  H^\pm_J=\{\mathfrak a \in H^2(M;\mathbb{R}) \mid %
           \mbox{there exists} \ \alpha \in \mathcal Z_J^{\pm} \ \mbox{such that} \ \mathfrak a = [\alpha]\}.
  \end{align*}
  We say $J$ is $C^\infty$  pure if $H_J^ + \cap H_J^- = \{0\}$, $C^\infty$  full
  if $H_J^+ + H_J^- = H^2(M;\mathbb{R})$, %
  and $J$ is $C^\infty$  pure and full if %
  \begin{align*}
  H^2(M;\mathbb{R}) = H_J^+\oplus H_J^-.
  \end{align*}
  Let us denote by $h_J^{+}$ and $h_J^{-}$ the dimensions of $H_{J}^{+}$ and $H_{J}^{-}$, respectively.
  \end{defi}

  It is interesting to consider whether or not the subgroups $H_{J}^{+}$ and $H_{J}^{-}$
  induce a direct sum decomposition of $H^2(M,\mathbb R)$.
  This is known to be true for integrable almost complex structures $J$
  which admit compatible K\"{a}hler metrics on compact manifolds of any dimension.
  In this case, the induced decomposition is nothing but the classical real
  Hodge-Dolbeault decomposition of $H^2(M,\mathbb R)$ (see \cite{Barth}).
  However, for non-integrable case, this is true only for dimension $4$.
  This is proved by T. Draghici, T.-J. Li and W. Zhang  in \cite{DLZ1}.

  \begin{prop} {\rm (cf. \cite[Theorem 2.3]{DLZ1})}\label{pure and full}
  For any closed almost complex 4-manifold $(M,J)$, $J$ is $C^\infty$ pure and full.
  \end{prop}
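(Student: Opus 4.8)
The plan is to fix an almost Hermitian metric $g$ compatible with $J$, with fundamental form $F$, and to run the argument through Hodge theory on the closed oriented Riemannian $4$-manifold $(M,g)$. In dimension $4$ the Hodge star $*$ is an involution on $\Omega^2$, so the harmonic $2$-forms split as $\mathcal H^2=\mathcal H_g^+\oplus\mathcal H_g^-$ into self-dual and anti-self-dual pieces, and $H^2(M;\mathbb R)\cong\mathcal H^2$. The bridge between the two decompositions is the pointwise compatibility $\Lambda_g^+=\mathbb R F\oplus\Lambda_J^-$ and $\Lambda_J^+=\mathbb R F\oplus\Lambda_g^-$; in particular every $J$-anti-invariant $2$-form is self-dual and every anti-self-dual $2$-form is $J$-invariant. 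The first key observation I would record is that a closed $J$-anti-invariant form $\beta$ is automatically harmonic: being self-dual and closed it satisfies $\delta\beta=-{*}d{*}\beta=-{*}d\beta=0$. Hence $\mathcal Z_J^-=\mathcal H_J^-$, the space of harmonic $J$-anti-invariant forms, and the map $\mathcal Z_J^-\to H_J^-$, $\beta\mapsto[\beta]$, is an isomorphism onto a subspace of $\mathcal H^2$ consisting of self-dual classes.

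With this in hand, $C^\infty$ purity follows from the intersection pairing. For $\alpha^+\in\mathcal Z_J^+$ and $\beta\in\mathcal Z_J^-$ the cup product is $\int_M\alpha^+\wedge\beta=\int_M\langle\alpha^+,\beta\rangle\,\mathrm{vol}=0$, using that $\beta$ is self-dual and that $\Lambda_J^+\perp\Lambda_J^-$ pointwise; thus $H_J^+$ and $H_J^-$ are orthogonal under cup product. On the other hand the cup product is positive definite on $H_J^-$, since a nonzero class is represented by a nonzero self-dual harmonic $\beta$ with $\int_M\beta\wedge\beta=\|\beta\|_{L^2}^2>0$. If $\mathfrak a\in H_J^+\cap H_J^-$ then $\mathfrak a\cup\mathfrak a=0$ by orthogonality, forcing $\mathfrak a=0$; this is $C^\infty$ purity, and it also shows $H_J^+\subseteq (H_J^-)^\perp$ with respect to the cup product.

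It remains to prove $C^\infty$ fullness, and here I would establish the reverse inclusion $(H_J^-)^\perp\subseteq H_J^+$; since the cup product is positive definite, hence nondegenerate, on $H_J^-$, this yields $H^2(M;\mathbb R)=H_J^+\oplus H_J^-$. Given $\mathfrak a\in(H_J^-)^\perp$, take its harmonic representative $h=h^++h^-$. The anti-self-dual part $h^-$ is $J$-invariant and closed, so $[h^-]\in H_J^+$, while the orthogonality condition $\mathfrak a\perp H_J^-$ reduces to $h^+\perp_{L^2}\mathcal H_J^-$. Writing the self-dual harmonic form $h^+=fF+\psi_0$ with $\psi_0$ its $J$-anti-invariant part, the goal is to correct $h^+$ within its cohomology class to a $J$-invariant form, i.e. to solve $(d\gamma)_J^-=-\psi_0$ for a $1$-form $\gamma$; then $h^++d\gamma\in\mathcal Z_J^+$ and $[h^+]\in H_J^+$, so $\mathfrak a=[h^+]+[h^-]\in H_J^+$.

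The main obstacle is exactly this solvability, which I would settle by elliptic theory. The operator $\gamma\mapsto (d\gamma)_J^-$ from $1$-forms to sections of $\Lambda_J^-$ has closed range, and its cokernel consists of those $\eta\in\Gamma(\Lambda_J^-)$ with $\delta\eta=0$; by the self-duality of $\Lambda_J^-$ this is the same as $d\eta=0$, so the cokernel is precisely $\mathcal Z_J^-=\mathcal H_J^-$. The equation is therefore solvable if and only if $\psi_0\perp_{L^2}\mathcal H_J^-$, and the pointwise orthogonality of $\mathbb R F$ and $\Lambda_J^-$ gives $\langle fF,\beta\rangle=0$ for $\beta\in\mathcal H_J^-$, whence $\langle h^+,\beta\rangle_{L^2}=\langle\psi_0,\beta\rangle_{L^2}$ and $h^+\perp\mathcal H_J^-$ forces $\psi_0\perp\mathcal H_J^-$. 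This closes the argument. I expect the delicate point to be the cokernel identification: it is here that dimension $4$ is indispensable, through both $*^2=1$ on $2$-forms and the inclusion $\Lambda_J^-\subseteq\Lambda_g^+$ that makes closed $J$-anti-invariant forms harmonic.
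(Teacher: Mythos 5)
Your argument is correct and is essentially the standard proof from the cited source \cite{DLZ1}, whose ingredients this paper itself records: purity via the cup product (closed $J$-anti-invariant forms are self-dual harmonic, so the pairing is positive definite on $H_J^-$ and vanishes between $H_J^+$ and $H_J^-$), and fullness by solving $P_J^-(d\gamma)=-\psi_0$, whose solvability criterion (closed range with cokernel $\mathcal{Z}_J^-$) is exactly the content of Lemma \ref{Lejmi lemma} and the resulting orthogonal decomposition $\Omega_J^-=\mathcal{Z}_J^-\oplus P_J^-(d\Omega^1)$. The paper quotes this proposition without reproducing a proof, but your route coincides with the one underlying its own machinery (cf.\ Remark \ref{g rem}), so there is nothing to correct.
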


  Suppose $(M,g,J,F)$ is a closed almost Hermitian $4$-manifold, the Hodge star operator $*_g$ gives
  the well-known self-dual, anti-self-dual decomposition of 2-forms as well as the corresponding
  splitting of the bundle (see \cite{DK}):
  \begin{equation}\label{decomposition about g 1}
   \Omega^2 = \Omega_g^+ \oplus \Omega_g^-, \quad \alpha = \alpha_g^+ + \alpha_g^-;
  \end{equation}
  \begin{equation}\label{decomposition about g 2}
  {\Lambda}^2 = {\Lambda}_g^+ \oplus {\Lambda}_g^-.
  \end{equation}
  Since the Hodge-de Rham-Laplace operator commutes with $*_g$,
  the decomposition \eqref{decomposition about g 2} holds for the space $\mathcal {H}_g$ of harmonic 2-forms as well.
  By Hodge theory, this induces cohomology decomposition by the metric $g$:
  \begin{align}\label{harmonic decomposition}
  H^2(M;\mathbb{R})\cong\mathcal {H}_g=\mathcal {H}_g^+\oplus\mathcal
  {H}_g^-.
  \end{align}
  Similar to Definition \ref{definition of J anti}, one defines
  \begin{align}
  H_g^{\pm} = \{ \mathfrak a \in H^2(M;\mathbb{R}) \mid
             \mathfrak a = [\alpha] \,\; \mbox{for some} \,\;\alpha \in \mathcal Z_g^{\pm}:=
  \mathcal Z^2\cap\Omega_g^{\pm} \}.
  \end{align}
  It is easy to see that
  \begin{align*}
  H_g^\pm \cong \mathcal Z_g^{\pm} = \mathcal {H}_g^\pm
  \end{align*}
  and \eqref{harmonic decomposition} can be written as
  \begin{align}
  H^2(M;\mathbb{R}) = H_g^+ \oplus H_g^-.
  \end{align}

   On an almost Hermitian $4$-manifold,
   decompositions \eqref{splitting of bundles}
  and \eqref{decomposition about g 2} are related as follows:
  \begin{align}
  & {\Lambda}_J^+=\mathbb{R} F\oplus{\Lambda}_g^-, \label{205} \\
  & {\Lambda}_g^+=\mathbb{R} F\oplus{\Lambda}_J^-, \label{206} \\
  & {\Lambda}_J^+\cap{\Lambda}_g^+=\mathbb{R} F, \ \ \ \ {\Lambda}_J^-\cap{\Lambda}_g^-=\{0\}. \label{207}
  \end{align}
  It is easy to see that
  $\mathcal{Z}_J^-\subset \mathcal {H}_g^+$ and $\mathcal {H}_g^-\subset \mathcal{Z}_J^+$.
  Let $b_2$, $b^{+}$ and $b^{-}$ be the second, the self-dual and
  the anti-self-dual Betti number of $M$, respectively.
  Thus $b_2=b^{+}+b^{-}$. %
  Moreover, there hold (see \cite{DLZ1}):
  \begin{align}
  H_J^- \cong \mathcal Z_J^-, \quad
  h_J^+ + h_J^- = b_2, \quad
  h_J^+ \geq b^-, \quad h_J^- \leq b^+.
  \end{align}

  Lejmi recognizes $\mathcal Z_J^-$ as the kernel of an elliptic operator on $\Omega_J^-$.
  \begin{lem}\label{Lejmi lemma}{\rm (cf. \cite{Le1,Le2})}
  Let $(M,g,J,F)$ be a closed almost Hermitian $4$-manifold.
  Let operator $P: \Omega_J^-\rightarrow\Omega_J^-\nonumber$ be defined by
  \begin{align*}
   P(\psi) = P_J^-(d\delta_g\psi),
  \end{align*}
  where $P_J^-: \Omega^2 \rightarrow \Omega_J^-$ is the projection, $\delta_g$ is the
  codifferential operator with respect to metric $g$.
  Then $P$ is a self-adjoint strongly elliptic linear operator with kernel the $g$-harmonic $J$-anti-invariant 2-forms.
  \end{lem}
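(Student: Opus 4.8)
The whole proof rests on one structural feature of almost Hermitian four-manifolds recorded in \eqref{206}: since $\Lambda_g^+=\mathbb R F\oplus\Lambda_J^-$, every $J$-anti-invariant $2$-form is self-dual, i.e. $*_g\psi=\psi$ for $\psi\in\Omega_J^-$. I would begin with the two formal properties. Linearity of $P$ is immediate, since $P_J^-$, $d$ and $\delta_g$ are all linear. For self-adjointness I would pair against a second form $\phi\in\Omega_J^-$ and use that $P_J^-$ is an orthogonal projection together with the fact that $d$ and $\delta_g$ are $L^2$-adjoints:
\begin{align*}
\langle P\psi,\phi\rangle_{L^2}=\langle P_J^-(d\delta_g\psi),\phi\rangle_{L^2}=\langle d\delta_g\psi,\phi\rangle_{L^2}=\langle\delta_g\psi,\delta_g\phi\rangle_{L^2},
\end{align*}
which is visibly symmetric in $\psi$ and $\phi$; hence $P=P^*$. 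The same identity with $\phi=\psi$ shows that $P$ is non-negative, $\langle P\psi,\psi\rangle_{L^2}=\|\delta_g\psi\|_{L^2}^2\ge0$.

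Next I would identify the kernel. Because $P$ is non-negative and self-adjoint, $P\psi=0$ is equivalent to $\langle P\psi,\psi\rangle_{L^2}=\|\delta_g\psi\|_{L^2}^2=0$, i.e. to $\delta_g\psi=0$; conversely $\delta_g\psi=0$ trivially gives $P\psi=P_J^-(d\delta_g\psi)=0$. Now I invoke self-duality: for $\psi\in\Omega_J^-$ one has $\delta_g\psi=-*_gd*_g\psi=-*_gd\psi$, so $\delta_g\psi=0$ if and only if $d\psi=0$. Thus
\begin{align*}
\ker P=\{\psi\in\Omega_J^-:\ d\psi=0\}=\{\psi\in\Omega_J^-:\ d\psi=\delta_g\psi=0\},
\end{align*}
which is exactly the space of $g$-harmonic $J$-anti-invariant $2$-forms (equivalently $\mathcal Z_J^-$), as claimed.

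It remains to prove strong ellipticity, which is the only genuinely analytic point and the step I expect to be the crux. The principal symbol of $d\delta_g$ at a nonzero covector $\xi$ is $\psi\mapsto\xi\wedge\iota_{\xi^\sharp}\psi$, so the symbol of $P$ is $\psi\mapsto P_J^-(\xi\wedge\iota_{\xi^\sharp}\psi)$; pairing with $\psi\in\Lambda_J^-$ (where $P_J^-$ may be dropped) and using that $\iota_{\xi^\sharp}$ is the adjoint of $\xi\wedge\,\cdot\,$ gives
\begin{align*}
\bigl\langle \sigma_2(P)(\xi)\psi,\psi\bigr\rangle=\bigl\langle \xi\wedge\iota_{\xi^\sharp}\psi,\psi\bigr\rangle=|\iota_{\xi^\sharp}\psi|^2\ge0,
\end{align*}
with sign consistent with $\langle P\psi,\psi\rangle_{L^2}=\|\delta_g\psi\|_{L^2}^2$. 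The main obstacle is then the purely pointwise, four-dimensional linear-algebra fact that contraction with a nonzero covector is injective---indeed definite---on self-dual $2$-forms: I would verify in an orthonormal coframe adapted to $\xi$ that a self-dual $2$-form carries equal $\xi$-vertical and $\xi$-horizontal parts, whence $|\iota_{\xi^\sharp}\psi|^2=\tfrac12|\xi|^2|\psi|^2$ for every $\psi\in\Lambda_g^+$, and in particular for $\psi\in\Lambda_J^-\subset\Lambda_g^+$. Substituting shows the principal symbol is positive definite, $\bigl\langle\sigma_2(P)(\xi)\psi,\psi\bigr\rangle=\tfrac12|\xi|^2|\psi|^2$, so $P$ is strongly elliptic on $\Omega_J^-$; standard elliptic theory on the closed manifold then makes the kernel finite-dimensional and consisting of smooth forms, which are the $g$-harmonic $J$-anti-invariant $2$-forms already identified above.
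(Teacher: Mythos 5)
Your proof is correct, but note that the paper itself offers no proof of this lemma at all: it is quoted as a known result with a citation to Lejmi's papers, so there is no internal argument to compare against. Your reconstruction is essentially the standard (and Lejmi's) argument, and all the key steps check out: self-adjointness and non-negativity via $\langle P\psi,\phi\rangle_{L^2}=\langle\delta_g\psi,\delta_g\phi\rangle_{L^2}$ (using that $P_J^-$ is a pointwise orthogonal projection, which holds because the $J$-action on $2$-forms is a $g$-isometry); the kernel identification via the crucial four-dimensional inclusion $\Lambda_J^-\subset\Lambda_g^+$ from \eqref{206}, so that $\delta_g\psi=-*_gd*_g\psi=-*_gd\psi$ and hence $\delta_g\psi=0$ iff $d\psi=0$, giving $\ker P=\mathcal Z_J^-=$ the $g$-harmonic $J$-anti-invariant forms; and strong ellipticity via the symbol computation, where your pointwise identity $|\iota_{\xi^\sharp}\psi|^2=\tfrac12|\xi|^2|\psi|^2$ for $\psi\in\Lambda_g^+$ is easily verified in an adapted orthonormal coframe (with $\xi=|\xi|e^1$, a self-dual form $a(e^{12}+e^{34})+b(e^{13}-e^{24})+c(e^{14}+e^{23})$ contracts to $|\xi|(ae^2+be^3+ce^4)$). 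The only point worth flagging is implicit rather than missing: the kernel computation integrates by parts, so a priori it identifies the smooth kernel; since ellipticity (which you establish independently) gives regularity of distributional solutions, the two notions of kernel agree, and your closing remark covers this.
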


  Hence one has the decomposition of $\Omega_J^-$:
  \begin{align*}
  \Omega_J^- = {\rm ker}\, P \oplus P_J^-(d\Omega^1) = \mathcal{Z}_J^- \oplus P_J^-(d\Omega^1).
  \end{align*}

  \begin{rem}
  As a classical result of Kodaira and Morrow ({\rm \cite[Theorem 4.3]{KoMo}})
  showing the upper semi-continuity of the kernel of a family of elliptic differential operators,
  we know that $h^-_J$ is a upper semi-continuous function under the deformation of almost complex structures.
  \end{rem}

  Let $H_J^{-,\perp}$ denote the subgroup of $\mathcal {H}_g^+$ which
  is orthogonal to $H_J^-$ with respect to the cup product, that is,
  \begin{align}
  H_J^{-,\perp} := \{ \beta \in \mathcal Z_g^+ \mid
    \textstyle\int_M \beta\wedge\alpha=0 \ \ \forall \alpha \in \mathcal Z_J^- \}.
  \end{align}

  By Lemma \ref{Lejmi lemma} and the results in \cite[Lemmas 2.4 and 2.6]{DLZ1}, we have
  the following lemma.

  \begin{lem}\label{g lemma}
  ({\rm \cite{TWZZ}})
  Let $(M,g)$ be a closed Riemannian $4$-manifold.
  If $\alpha \in \Omega_g^+$ and
  $\alpha = \alpha_{\rm h} + d\theta + \delta_g\psi$ is its Hodge decomposition,
  then $P_g^+(d\theta) =  P_g^+(\delta_g\psi)$ and $P_g^-(d\theta) = -P_g^-(\delta_g\psi)$,
  where $P_g^{\pm}: \Omega^2 \rightarrow \Omega_g^{\pm}$ are the projections.
  Moreover, the $2$-form $\alpha-2P_g^+(d\theta)=\alpha_{\rm h}$ is harmonic and
  $\alpha + 2P_g^-(d\theta) = \alpha_{\rm h} + 2d\theta$.
  In particular, if $(M,g,J,F)$ is a closed almost Hermitian $4$-manifold and if $\alpha \in H_J^{-,\perp}$
  is a self-dual harmonic $2$-form, then $\alpha = fF + P_J^-(d\theta)$
  for some function $f \not\equiv 0$ and $\alpha-d\theta\in \mathcal Z_J^+$.
  \end{lem}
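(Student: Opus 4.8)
My plan is to treat the three assertions in turn: the two projection identities (with the ``moreover'' formulas as immediate bookkeeping), and then the structural ``in particular'' statement, which is really a separate argument built on the almost Hermitian decomposition and Lemma~\ref{Lejmi lemma}. The two computational facts I would set up first are that on a $4$-manifold $*_g^2=\mathrm{id}$ on $2$-forms, so $P_g^{\pm}=\tfrac12(\mathrm{id}\pm *_g)$, and that in this dimension $*_g$ interchanges exact and coexact $2$-forms: for a $1$-form $\theta$ and a $3$-form $\psi$ one has $*_g(d\theta)=\delta_g(*_g\theta)$ and $*_g(\delta_g\psi)=-\,d(*_g\psi)$, both of which drop out of $\delta_g=-*_gd*_g$ on $3$-forms together with $*_g^2=\pm\mathrm{id}$ on $1$- and $2$-forms.

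For the first assertion I would use that $\alpha$ is self-dual. Applying $*_g$ to the Hodge decomposition $\alpha=\alpha_{\rm h}+d\theta+\delta_g\psi$ and invoking the interchange identities yields $\alpha=*_g\alpha=*_g\alpha_{\rm h}+\delta_g(*_g\theta)-d(*_g\psi)$, which is again a sum of a harmonic, a coexact, and an exact term. Since the Hodge decomposition is unique, matching exact and coexact parts forces $d\theta=*_g(\delta_g\psi)$ and $\delta_g\psi=*_g(d\theta)$. Substituting into $P_g^{\pm}=\tfrac12(\mathrm{id}\pm *_g)$ gives $P_g^+(d\theta)=\tfrac12(d\theta+\delta_g\psi)=P_g^+(\delta_g\psi)$ and $P_g^-(d\theta)=\tfrac12(d\theta-\delta_g\psi)=-P_g^-(\delta_g\psi)$, and the ``moreover'' identities then follow at once: $2P_g^+(d\theta)=d\theta+\delta_g\psi$ gives $\alpha-2P_g^+(d\theta)=\alpha_{\rm h}$, while $2P_g^-(d\theta)=d\theta-\delta_g\psi$ gives $\alpha+2P_g^-(d\theta)=\alpha_{\rm h}+2d\theta$.

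For the ``in particular'' part I would pass to the almost Hermitian structure. By \eqref{206} a self-dual form splits as $\alpha=fF+\beta$ with $f\in C^\infty(M)$ and $\beta\in\Omega_J^-$. The decisive step is to convert the cup-product hypothesis $\alpha\in H_J^{-,\perp}$ into an $L^2$-orthogonality for $\beta$: since $g$ is $J$-compatible the eigenspaces $\Omega_J^{\pm}$ are pointwise orthogonal, so $F\perp\gamma$ and hence $F\wedge\gamma=\langle F,\gamma\rangle_g\,\mathrm{vol}_g=0$ for every $\gamma\in\mathcal Z_J^-\subset\Omega_J^-\subset\Omega_g^+$; moreover $\beta$ and $\gamma$ are both self-dual, so $\beta\wedge\gamma=\langle\beta,\gamma\rangle_g\,\mathrm{vol}_g$. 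Thus $0=\int_M\alpha\wedge\gamma=\int_M\langle\beta,\gamma\rangle_g\,\mathrm{vol}_g$ for all $\gamma\in\mathcal Z_J^-$, i.e. $\beta\perp_{L^2}\mathcal Z_J^-$. Now Lemma~\ref{Lejmi lemma} provides the $L^2$-orthogonal splitting $\Omega_J^-=\mathcal Z_J^-\oplus P_J^-(d\Omega^1)$ (kernel and image of the self-adjoint elliptic $P$), whence $\beta\in P_J^-(d\Omega^1)$, say $\beta=P_J^-(d\theta)$, giving $\alpha=fF+P_J^-(d\theta)$. If $f\equiv 0$ then $\alpha=P_J^-(d\theta)$ is a harmonic element of $\Omega_J^-$, hence lies in $\ker P=\mathcal Z_J^-$; combined with $\beta\perp_{L^2}\mathcal Z_J^-$ this forces $\alpha=0$, so for $\alpha\neq 0$ we have $f\not\equiv 0$. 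Finally, decomposing $d\theta=P_J^+(d\theta)+P_J^-(d\theta)$ gives $\alpha-d\theta=fF-P_J^+(d\theta)\in\Omega_J^+$, and it is closed since $d\alpha=0$; hence $\alpha-d\theta\in\mathcal Z_J^+$.

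I expect the main obstacle to be the third stage, specifically converting the cohomological orthogonality $\int_M\alpha\wedge\gamma=0$ into the $L^2$-statement $\beta\perp_{L^2}\mathcal Z_J^-$. This hinges on the self-duality of all the forms involved—so that the wedge product equals the metric pairing—together with the $L^2$-orthogonality of the Lejmi splitting from Lemma~\ref{Lejmi lemma}; these are precisely the special features of dimension $4$ that the argument exploits. By contrast, the first stage is routine once the $4$-dimensional star identities are pinned down, the only real risk there being a sign slip in the interchange formulas.
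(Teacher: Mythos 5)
Your proof is correct. The paper itself gives no proof of this lemma --- it is quoted from \cite{TWZZ} with the remark that it follows from Lemma \ref{Lejmi lemma} and \cite[Lemmas 2.4 and 2.6]{DLZ1} --- and your argument is exactly that intended derivation: the Hodge-star/uniqueness-of-Hodge-decomposition computation for the projection identities, followed by the pointwise orthogonality of $\Lambda_J^{\pm}$ and the $L^2$-orthogonal Lejmi splitting $\Omega_J^- = \mathcal{Z}_J^- \oplus P_J^-(d\Omega^1)$ for the ``in particular'' statement.
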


  \begin{rem}\label{g rem}
  As direct consequences of Lemmas \ref{Lejmi lemma} and \ref{g lemma}, we have decompositions
  as self-dual harmonic 2-forms and as cohomology classes:
  \begin{align*}
  \mathcal {H}_g^+ = \mathcal{Z}_J^- \oplus H_J^{-,\perp}, \quad\quad
          H_J^{+} \cong H_J^{-,\perp} \oplus \mathcal {H}_g^-.
  \end{align*}
  \end{rem}

  \vskip 6pt

   Almost complex structures $J_1$ and $J_2$ are said to be $g$-related if they are both compatible with $g$.
  In \cite{DLZ2}, T. Draghici, T.-J. Li and W. Zhang computed the subgroups $H_{J}^{+}$ and $H_{J}^{-}$
  and their dimensions $h_J^{+}$ and $h_J^{-}$ for almost complex structures metric related to an integrable one.
  Using Gauduchon metrics (\cite{Gauduchon}),
  they proved that the almost complex structures $\tilde{J}$ with $h_{\tilde{J}}^{-}=0$
  form an open dense set in the $C^{\infty}$-Fr\'{e}chet-topology in the space
   of almost complex structures metric related to an integrable one (\cite[Theorem 1.1]{DLZ2}).
  Based on this, they made a conjecture (Conjecture 2.4 in \cite{DLZ2}) about the dimension $h_J^-$ of
  $H_J^-$ on a compact $4$-manifold which asserts that $h_J^-$ vanishes for
  generic almost complex structures $J$.
  In particular, they have confirmed their conjecture for 4-manifolds with $b^+=1$
  (\cite[Theorem 3.1]{DLZ2}).
  Fortunately, in \cite{TWZZ},
  Qiang Tan, Hongyu Wang, Ying Zhang and Peng Zhu confirmed the conjecture completely.

  \begin{prop}\label{JGEA}
  ({\rm cf. \cite[Theorem 1.1]{TWZZ}})
  Let $M$ be a closed $4$-manifold admitting almost complex structures.
  Then the set of almost complex structures $J$ on $M$ with $h_J^-=0$
  is an open dense subset of $\mathcal{J}$ in the $C^\infty$-topology.
  \end{prop}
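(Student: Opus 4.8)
The plan is to reduce the statement to a finite-dimensional, twistor-theoretic question about a fixed space of self-dual harmonic forms, and to treat openness and density separately. Openness is immediate: by the upper semi-continuity of $h_J^-$ under deformations of $J$ (the Kodaira--Morrow result recorded in the remark following Lemma \ref{Lejmi lemma}), together with the fact that $h_J^-$ is a non-negative integer, the locus $\{J\in\mathcal J : h_J^-=0\}$ is open in the $C^\infty$-topology.

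For density, fix $J_0$ and a $J_0$-compatible metric $g$. Since it suffices to produce, arbitrarily $C^\infty$-close to $J_0$, some $J\in\mathcal J$ with $h_J^-=0$, I would deform $J_0$ within the $g$-compatible structures. In dimension four these are exactly the unit-norm self-dual $2$-forms $F$ (the fundamental forms, i.e. sections of the twistor sphere bundle $S(\Lambda_g^+)$), and for all of them the space $\mathcal H_g^+$ of self-dual harmonic forms is the same. By \eqref{206} one has $\Lambda_g^+=\mathbb RF\oplus\Lambda_J^-$, so a self-dual harmonic form lies in $\mathcal Z_J^-$ precisely when its $\mathbb RF$-component vanishes at every point; hence, fixing a basis $\psi_1,\dots,\psi_{b^+}$ of $\mathcal H_g^+$,
$$ \mathcal Z_J^-=\Big\{\textstyle\sum_i w_i\psi_i : \big\langle\textstyle\sum_i w_i\psi_i,\,F\big\rangle\equiv0\ \text{on}\ M\Big\}, $$
so that $h_J^-=b^+-\dim\operatorname{span}\{e_x : x\in M\}$, where $e_x\in(\mathbb R^{b^+})^*$ is the evaluation functional $e_x(w)=\langle\sum_i w_i\psi_i(x),F(x)\rangle$.

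Density then amounts to making the functionals $\{e_x\}$ span $(\mathbb R^{b^+})^*$ after an arbitrarily small perturbation of $F$, which I would achieve by a finite induction enlarging the span one dimension at a time. Suppose $e_{x_1},\dots,e_{x_m}$ are independent with span $V_m\subsetneq(\mathbb R^{b^+})^*$; choose $0\neq v$ annihilating $V_m$ and set $\beta:=\sum_i v_i\psi_i\neq0$. Either $\langle\beta,F\rangle\not\equiv0$, and then $e_{x_{m+1}}(v)\neq0$ for some $x_{m+1}$, giving $e_{x_{m+1}}\notin V_m$; or $\langle\beta,F\rangle\equiv0$, i.e. $\beta\in\mathcal Z_J^-$, in which case the key observation is that $\beta\perp F$ pointwise forces $\beta(x_*)$ to lie in the tangent space to the twistor fibre $S(\Lambda^+_{g,x_*})$ at $F(x_*)$, for any $x_*$ with $\beta(x_*)\neq0$. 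Tilting $F$ slightly toward $\beta$ in a small ball about such an $x_*$, chosen disjoint from $x_1,\dots,x_m$, yields a $C^\infty$-small, localized perturbation with $\langle\beta,F\rangle\neq0$ near $x_*$, hence $e_{x_*}\notin V_m$, while $e_{x_1},\dots,e_{x_m}$ are left untouched. After at most $b^+$ steps the $e_x$ span, so $h_J^-=0$ for an $F$ as close to the fundamental form of $J_0$ as desired.

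The main obstacle is precisely this density step: arranging a strict increase of the rank by perturbations that are at once small, localized and smooth, and verifying that the required tilt is admissible — which it is, because orthogonality to $F$ places $\beta$ in the tangent space to the twistor sphere. This twistor reformulation is what lets the induction run for arbitrary $b^+$; when $b^+=1$ one has $h_J^-\le1$ and a single step suffices, recovering the case settled in \cite{DLZ2}. A more analytic route would perturb Lejmi's operator $P_J$ of Lemma \ref{Lejmi lemma} so as to push its kernel off $0$; there the complication is that, since $\delta_g\beta=0$ for $\beta\in\ker P_J$, the first variation of the natural quadratic form on the kernel vanishes and one is forced into a second-order analysis governed by $\|\delta_g\dot{\tilde\beta}\|^2$ with $\dot{\tilde\beta}=\frac{d}{dt}\big|_0 P_J^-(\beta)$ — an estimate the geometric picture above sidesteps.
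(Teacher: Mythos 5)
Your density argument is correct, and it takes a genuinely different route; note first that this paper does not actually prove Proposition \ref{JGEA} --- it quotes it from \cite[Theorem 1.1]{TWZZ} and instead proves the $\omega$-compatible analogue (the Main Theorem). Like \cite{TWZZ} and \cite{DLZ2}, you deform inside the twistor family of a fixed metric $g$, where \eqref{206} identifies $\mathcal Z_J^-$ with $\{\alpha\in\mathcal H_g^+ : \langle\alpha,F\rangle\equiv 0\ \text{pointwise}\}$; but where \cite{TWZZ} runs on \cite[Proposition 3.7]{DLZ2} (for $g$-related $J'\neq\pm J$ one has $\dim(H_J^-\cap H_{J'}^-)\le 1$) together with the volume infimum $\mu_J>0$, you encode $h_J^-=b^+-\dim\operatorname{span}\{e_x\}$ and raise the rank of the evaluation functionals one step at a time by tilting $F$ toward a nonvanishing $\beta\in\mathcal Z_J^-$ in a ball disjoint from $x_1,\dots,x_m$. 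The bookkeeping is sound: the old functionals are literally unchanged, $e_{x_*}(v)\neq 0$ forces $e_{x_*}\notin V_m$, the tilt stays in the twistor sphere exactly because $\beta\perp F$ pointwise, and at most $b^+$ perturbations, each $C^\infty$-small, are needed. This is more elementary and self-contained than \cite{TWZZ}, and far lighter than what this paper must do in the compatible case (Darboux charts, the model deformation of Example \ref{example}, the boundary value problems (\ref{P^-_g 2}) and (\ref{P^-_g 3}), and the gluing Lemma \ref{lemma a}). The trade-off is scope: your perturbation moves the fundamental form $F$ itself, so it cannot be run with $\omega$ pinned; it proves Proposition \ref{JGEA} but says nothing about the Main Theorem, which is precisely why this paper needs the heavier machinery.

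The one genuine gap is openness, which you call immediate. The Kodaira--Morrow theorem \cite[Theorem 4.3]{KoMo} gives upper semicontinuity of kernels only for families of elliptic operators depending differentiably on a finite-dimensional parameter; this is why, although the remark after Lemma \ref{Lejmi lemma} states semicontinuity loosely, the paper is careful to call the resulting property \emph{path-wise} semi-continuity in the discussion preceding Proposition \ref{dense prop}. Path-wise semicontinuity does not formally yield openness of $\{J : h_J^-=0\}$ in the Fr\'echet $C^\infty$-topology: given $J_k\to J$ with $h^-_{J_k}\ge 1$, you would still need to thread a subsequence of the $J_k$ through $J$ by a single family $J_t$ that is smooth in $t$ (including at $t=0$) and satisfies $J_t^2=-\mathrm{id}$ for all $t$, which requires a diagonal fast-convergence construction inside the Fr\'echet manifold $\mathcal{J}$. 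The clean fix is the direct argument of Proposition \ref{open prop} (following \cite{TWZZ}): take $g_k$-harmonic $J_k$-anti-invariant forms $\phi_k$ with $\|\phi_k\|_{L^2(g)}=1$, use the uniform elliptic estimate $\|\phi_k\|_{L^2_2(g_k)}\le c_k\|\phi_k\|_{L^2(g_k)}$ and Sobolev compactness to extract a limit $\phi\neq 0$ that is closed and $J$-anti-invariant, contradicting $h_J^-=0$. With that substitution (or with the path-threading argument spelled out), your proof is complete.
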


  A symplectic structure on a differentiable manifold is a nondegenerate closed $2$-form $\omega\in\Omega^2$.
  A differentiable manifold with some fixed symplectic structure  is called a symplectic manifold.
  Suppose $(M, \omega)$ is a closed symplectic manifold.
  Let $\mathcal{J}$ be the space of all almost complex structures on $M$
  and denote by $\mathcal{J}_{\omega}^{\rm c}$ and $\mathcal{J}_{\omega}^{\rm t}$ respectively
  the spaces of $\omega$-compatible and $\omega$-tamed almost complex structures on $M$.
  \begin{align*}
  & \mathcal{J}_{\omega}^{\rm t} = \{ J\in\mathcal{J} \mid \omega(X,JX)>0, \forall X \in TM, X \neq 0 \}, \\
  & \mathcal{J}_{\omega}^{\rm c} = \{ J\in\mathcal{J}_{\omega}^{\rm t} \mid \omega(JX,JY)=\omega(X,Y),
  \forall X,Y \in TM \}.
  \end{align*}
  It is well known that $\mathcal{J}_{\omega}^{\rm c}$ and $\mathcal{J}_{\omega}^{\rm t}$ are
  contractible $C^\infty$-Fr\'{e}chet spaces and $\mathcal{J}_{\omega}^{\rm t}$ is an
  open subset of $\mathcal{J}$ in the $C^\infty$-topology.
  See \cite{Au, DLZ1, DLZ2} for details.
  Then by Proposition \ref{JGEA}, we know that
   the set of $\omega$-tamed almost complex structures $J$ on $M$ with $h^-_J=0$ is an open
  dense subset of $\mathcal{J}_{\omega}^{\rm t}$ in the $C^\infty$-topology.
  In this paper we want to prove the compatible case:

  \vskip 6pt

  \noindent {\bf Main Theorem:}
 {\it Let $(M,\omega)$ be a closed symplectic $4$-manifold.
  Then the set of $\omega$-compatible almost complex structures $J$ on $M$ with $h^-_J=0$ is an open
  dense subset of $\mathcal{J}_{\omega}^{\rm c}$ in the $C^\infty$-topology.
  }

  \begin{rem}
  In general, on a closed almost K\"{a}hler $4$-manifold $(M,g,J,\omega)$, we have
   $0\leq h^-_J\leq b^+-1$.
   If $h^-_J= b^+-1$, one has a generalized $dd^c$-lemma (cf. {\rm \cite{Le3}}),
   where the twisted differential $d^c$ is defined by $d^c=(-1)^pJdJ$ acting on $p$-forms.
   Hence, we pose the following question:
   On a closed symplectic $4$-manifold $(M,\omega)$, is there a $\omega$-compatible almost complex structure $J$
   such that $h^-_J=b^+-1$ ?
  \end{rem}

  The rest of the paper is organized as follows.
 In \S 2, we make the deformation for the standard complex structure on torus $\mathbb{T}^4$.
 By this deformation, we give some interesting results on the standard torus $\mathbb{T}^4$.
 Finally in \S 3 we give the proof of our Main Theorem.

 \vskip 6pt

  \noindent{\bf Acknowledgements.}\,
   The first author would like to thank Professor Xiaojun Huang for his support.
  The first author also would like to thank Professors Tedi Draghici and Hongyu Wang for their patient guidance.
   The second author would like to thank East China Normal University and Professor Qing Zhou
   for hosting his visit in the spring semester in 2014.
   The authors dedicate this paper to the memory of Professor Ding Weiyue in deep appreciation for his long-term support of their work.
   The authors are grateful to the referees for their valuable comments and suggestions. Especially, the referees suggest a more direct
    argument about the openness part and point out the mistake in the proof of Lemma \ref{lemma a}.

 \section{Almost complex structures on $\mathbb{T}^4$}\setcounter{equation}{0}\label{specific example}

  In this section, we calculate the dimension of the $J$-anti-invariant cohomology
  subgroup on $4$-torus under the deformation of $\omega$-compatible almost complex structures.
  The following construction is a generalization of Example $2.6$ in \cite{TWZ} which is constructed by T. Draghici and C. H. Taubes (\cite{Dra,Taubes2}).
  These computations provide another example where the dimension of the $J$-anti-invariant cohomology is not an invariant
  under the deformation of almost complex structures.

  \begin{exam}\label{example}
  {\rm Let $\mathbb{T}^4$ be the standard torus with coordinates $\{x^1,x^2,x^3,x^4\}$.
  Denote by $(g_0,J_0,\omega_0)$ the standard flat K\"{a}hler structure on $\mathbb{T}^4$,
  where
  $$
  g_0=\sum_i dx^i\otimes dx^i,\,\,\, \omega_0=dx^1\wedge dx^2+dx^3\wedge dx^4.
  $$
  So $J_0$ is given by
  $$
  J_0dx^1=dx^2,\,\,\,J_0dx^2=-dx^1,\,\,\,J_0dx^3=dx^4,\,\,\,J_0dx^4=-dx^3.
  $$
  Equivalently, $J_0$ may be given by specifying
  $$
  \Lambda^-_{J_0}=Span\{dx^1\wedge dx^3-dx^2\wedge dx^4,\,\,\,dx^1\wedge dx^4+dx^2\wedge dx^3\}.
  $$
   It is well known that $b^+=b^-=3$.
   Indeed, we have
   $$
   \mathcal{H}^+_{g_0}=Span\{\omega_0, \,\,dx^1\wedge dx^3-dx^2\wedge dx^4,\,\,dx^1\wedge dx^4+dx^2\wedge dx^3 \}
   $$
   and
    $$
   \mathcal{H}^-_{g_0}=Span\{dx^1\wedge dx^2-dx^3\wedge dx^4, \,\,dx^1\wedge dx^3+dx^2\wedge dx^4,
    \,\,dx^1\wedge dx^4-dx^2\wedge dx^3 \}.
   $$
   Further more, by a direct calculation, we can get
   $$
   H^+_{J_0}=Span\{[\omega_0],[dx^1\wedge dx^2-dx^3\wedge dx^4],[dx^1\wedge dx^3+dx^2\wedge dx^4],
  [dx^1\wedge dx^4-dx^2\wedge dx^3] \}
   $$
   and
   $$
    H^-_{J_0}=Span\{[dx^1\wedge dx^3-dx^2\wedge dx^4],\,\,[dx^1\wedge dx^4+dx^2\wedge dx^3] \}.
   $$
   Hence, $h^+_{J_0}=4$ and $h^-_{J_0}=2$.

  Suppose $\Psi$ is a linear symplectomorphism.
  It is well known that if $\lambda$ is a eigenvalue of $\Psi$ then $\frac{1}{\lambda}$ occurs (cf. \cite{MS2}).
   According to such a fact, we can construct the almost complex structures as follows.
  Let
  $$A=e^{\sin 2\pi(x^1+x^3)},\,\,\,B=e^{\sin 2\pi(x^1+x^4)},\,\,\,C=
  e^{\frac{1}{2}[\sin 2\pi(x^1+x^3)-\sin 2\pi(x^1+x^4)]}$$
  and
  $$D=e^{-\frac{1}{2}[\sin 2\pi(x^1+x^3)+\sin 2\pi(x^1+x^4)]}.$$
  Obviously, $C^2=\frac{A}{B}$ and $D^2=\frac{1}{AB}$.
  Consider the almost complex structure $J$ given by
  $$
  Jdx^1=Cdx^2,\,\,\,Jdx^2=-\frac{1}{C}dx^1,\,\,\,Jdx^3=Ddx^4,\,\,\,Jdx^4=-\frac{1}{D}dx^3,
  $$
  It is easy to see that
  \begin{eqnarray*}
    \Lambda^+_J &=& Span\{dx^1\wedge dx^2+dx^3\wedge dx^4,dx^1\wedge dx^2-dx^3\wedge dx^4, \\
                & & Bdx^1\wedge dx^3+dx^2\wedge dx^4,dx^1\wedge dx^4-Adx^2\wedge dx^3\},
  \end{eqnarray*}

  $$
  \Lambda^-_J=Span\{dx^1\wedge dx^4+Adx^2\wedge dx^3,Bdx^1\wedge dx^3-dx^2\wedge dx^4\},
  $$
  and $J$ is compatible with $\omega_0$.
  The corresponding metric is
  $$g(\cdot,\cdot)=\omega_0(\cdot,J\cdot)=\frac{1}{C}dx^1\otimes dx^1+Cdx^2\otimes dx^2+
  \frac{1}{D}dx^3\otimes dx^3+Ddx^4\otimes dx^4.$$
  Direct calculation shows
  \begin{eqnarray*}
    \mathcal{H}^+_g &=& Span\{\omega_0,\,\,\,(\frac{1}{1+A}-c_A)\omega_0+\frac{1}{1+A}(dx^1\wedge
     dx^4+Adx^2\wedge dx^3),  \\
     & & (\frac{1}{1+B}-c_B)\omega_0+\frac{1}{1+B}(Bdx^1\wedge dx^3-dx^2\wedge dx^4)\},
  \end{eqnarray*}
  where $$ c_A=\int_{\mathbb{T}^4} \frac{1}{1+A}dvol \,\,\, {\rm and} \,\,\,
   c_B=\int_{\mathbb{T}^4} \frac{1}{1+B}dvol.$$
  Obviously,
  $$[(\frac{1}{1+A}-c_A)\omega_0+\frac{1}{1+A}(dx^1\wedge dx^4+Adx^2\wedge dx^3)]\wedge\omega_0
    =(\frac{1}{1+A}-c_A)\omega^2_0 \not\equiv 0$$
  and
  $$[(\frac{1}{1+B}-c_B)\omega_0+\frac{1}{1+B}(Bdx^1\wedge dx^3-dx^2\wedge dx^4)]\wedge\omega_0
    =(\frac{1}{1+B}-c_B)\omega^2_0 \not\equiv 0.$$
    Note that $\mathcal{Z}^-_J\subset \mathcal {H}_g^+$
    and for any $\alpha\in\mathcal{Z}^-_J$,
     we must have $\alpha\wedge\omega_0\equiv 0$,
    so we can obtain $\mathcal{Z}^-_J=\{0\}$ which
  implies that $h^-_J=0$ and $h^+_J=6$.
  By the definition of $H_J^{-,\perp}$ and Remark \ref{g rem},
  we know that both
  $$[(\frac{1}{1+A}-c_A)\omega_0+\frac{1}{1+A}(dx^1\wedge dx^4+Adx^2\wedge dx^3)]
   $$
  and
  $$[(\frac{1}{1+B}-c_B)\omega_0+\frac{1}{1+B}(Bdx^1\wedge dx^3-dx^2\wedge dx^4)]
    $$
  are in $H_J^{-,\perp}$.

   \vskip 6pt

   Denote by
   $e^i\triangleq dx^i$ and $e^{ij}\triangleq dx^i\wedge dx^j$.
   Let $$\omega_0=e^{12}+e^{34}\,, \quad\quad\alpha_0=e^{12}-e^{34},$$
  $$\omega_1=(\frac{1}{1+A}-c_A)\omega_0+\frac{1}{1+A}(e^{14}+Ae^{23}),$$
  $$\alpha_1=(\frac{1}{1+A}-c_A)\alpha_0+\frac{1}{1+A}(e^{14}-Ae^{23}),$$
  $$\omega_2=(\frac{1}{1+B}-c_B)\omega_0+\frac{1}{1+B}(Be^{13}-e^{24}),$$
  and
  $$\alpha_2=(\frac{1}{1+B}-c_B)\alpha_0+\frac{1}{1+B}(Be^{13}+e^{24}).$$
  Consider the element $e^{13}-e^{24}$ which is $J_0$-anti-invariant.
  Since $$(e^{13}-e^{24})\wedge\omega_0=0,\,\,\,
  (e^{13}-e^{24})\wedge\omega_1=0$$
  and
  $$\int_{\mathbb{T}^4}(e^{13}-e^{24})\wedge\omega_2=\int_{\mathbb{T}^4}e^{1234}=1,$$
   by Hodge decomposition (cf. pp. 10 in \cite{DK}),
  we can get
  \begin{equation}\label{projection 1}
     P^+_g(e^{13}-e^{24})=\frac{1}{\parallel\omega_2\parallel^2_{L^2(\mathbb{T}^4,g)}}\omega_2+d^+_g\gamma_1.
  \end{equation}
  On the other hand,
  $$(e^{13}-e^{24})\wedge\alpha_0=0,\,\,\,
  (e^{13}-e^{24})\wedge\alpha_1=0$$
  and
  $$\int_{\mathbb{T}^4}(e^{13}-e^{24})\wedge\alpha_2
  =\int_{\mathbb{T}^4}\frac{B-1}{B+1}e^{1234}\triangleq a,$$
  so we have
  \begin{equation}\label{projection 2}
    P^-_g(e^{13}-e^{24})=\frac{a}{\parallel\alpha_2\parallel^2_{L^2(\mathbb{T}^4,g)}}\alpha_2+d^-_g\gamma_2.
  \end{equation}
  Here $d^+_g\triangleq P^+_g\circ d$, $d^-_g\triangleq P^-_g\circ d$ and
   $\gamma_1, \gamma_2\in\Omega^1$.
   By (\ref{projection 1}) and (\ref{projection 2}),
   we can get the following equation
   \begin{equation}\label{13-24}
   e^{13}-e^{24}=\frac{1}{\parallel\omega_2\parallel^2_{L^2(\mathbb{T}^4,g)}}\omega_2
    +\frac{a}{\parallel\alpha_2\parallel^2_{L^2(\mathbb{T}^4,g)}}\alpha_2+d^+_g\gamma_1+d^-_g\gamma_2.
   \end{equation}
  Since $e^{13}-e^{24}$, $\omega_2$ and $\alpha_2$ are all closed,
  $d(d^+_g\gamma_1+d^-_g\gamma_2)=0$.
  Additionally,
  \begin{equation}\label{equ 0}
    0=(e^{13}-e^{24})\wedge\omega_0=\frac{1}{\parallel\omega_2\parallel^2_{L^2(\mathbb{T}^4,g)}}(\frac{1}{1+B}-c_B)\omega^2_0
  +d^+_g\gamma_1\wedge\omega_0,
  \end{equation}
  hence
  \begin{eqnarray}\label{equ +}
    d^+_g\gamma_1 &=& -\frac{1}{\parallel\omega_2\parallel^2_{L^2(\mathbb{T}^4,g)}}(\frac{1}{1+B}-c_B)\omega_0
            +P^-_J(d^+_g\gamma_1) \nonumber \\
        &=& -\frac{1}{\parallel\omega_2\parallel^2_{L^2(\mathbb{T}^4,g)}}(\frac{1}{1+B}-c_B)\omega_0
    +d^-_J\gamma_1.
  \end{eqnarray}
  Similarly, we have
   $$
     P^+_g(e^{14}+e^{23})=\frac{1}{\parallel\omega_1\parallel^2_{L^2(\mathbb{T}^4,g)}}\omega_1+d^+_g\theta_1
   $$
   and
  $$
    P^-_g(e^{14}+e^{23})=\frac{b}{\parallel\alpha_1\parallel^2_{L^2(\mathbb{T}^4,g)}}\alpha_1+d^-_g\theta_2,
  $$
  where $\theta_1,\,\,\theta_2\in\Omega^1$ and $b\triangleq\int_{\mathbb{T}^4}\frac{A-1}{A+1}e^{1234}$.
  So
  \begin{equation}\label{14+23}
    e^{14}+e^{23}=\frac{1}{\parallel\omega_1\parallel^2_{L^2(\mathbb{T}^4,g)}}\omega_1
  +\frac{b}{\parallel\alpha_1\parallel^2_{L^2(\mathbb{T}^4,g)}}\alpha_1+d^+_g\theta_1+d^-_g\theta_2,
  \end{equation}
  and
  $d(d^+_g\theta_1+d^-_g\theta_2)=0$. } \hfill$\square$
  \end{exam}

  \vskip 6pt

  For a better understanding of the relationship between the cohomologies,
  please see the following table.

  \begin{center}
  \begin{tabular}{|c|l|}
    \hline

   $\mathcal{H}^+_{g_0}$ &  $\omega_0$, \,\,\, $e^{13}-e^{24}$, \,\,\, $e^{14}+e^{23}$ \\

   \hline

    $\mathcal{H}^-_{g_0}$ &  $e^{12}-e^{34}$, \,\,\, $e^{13}+e^{24}$, \,\,\, $e^{14}-e^{23}$  \\

   \hline

   $H_{J_0}^+$ &  $[\omega_0]$, \,\,\, $[e^{12}-e^{34}]$, \,\,\, $[e^{13}+e^{24}]$, \,\,\, $[e^{14}-e^{23}]$  \\

   \hline

   $H_{J_0}^-$ &  $[e^{13}-e^{24}]$, \,\,\, $[e^{14}+e^{23}]$    \\

   \hline

   $\mathcal{H}^+_g$ &  $\omega_0$, \,\,\, $(\frac{1}{1+A}-c_A)\omega_0+\frac{1}{1+A}(e^{14}+Ae^{23})$ \\
   & $(\frac{1}{1+B}-c_B)\omega_0+\frac{1}{1+B}(Be^{13}-e^{24})$   \\
   \hline
   $\mathcal{H}^-_g$ &  $e^{12}-e^{34}$, \,\,\,
    $(\frac{1}{1+A}-c_A)(e^{12}-e^{34})+\frac{1}{1+A}(e^{14}-Ae^{23})$,\\
    & $(\frac{1}{1+B}-c_B)(e^{12}-e^{34})+\frac{1}{1+B}(Be^{13}+e^{24})$   \\
   \hline

   $H^+_J$ &  $[\omega_0]$, \,\,\, $[e^{12}-e^{34}]$, \\
    & $[(\frac{1}{1+A}-c_A)(e^{12}-e^{34})+\frac{1}{1+A}(e^{14}-Ae^{23})]$,\\
    & $[(\frac{1}{1+B}-c_B)(e^{12}-e^{34})+\frac{1}{1+B}(Be^{13}+e^{24})]$, \\
    & $[(\frac{1}{1+A}-c_A)\omega_0+\frac{1}{1+A}(e^{14}+Ae^{23})]$, \\
    & $[(\frac{1}{1+B}-c_B)\omega_0+\frac{1}{1+B}(Be^{13}-e^{24})]$  \\

   \hline
   $H^-_J$ &  $0$              \\

   \hline

  \end{tabular}
  \end{center}
  \begin{center}
  {\bf Table 1.} Bases for $H_{J_0}^+$, $H_{J_0}^-$, $H_J^+$, etc.  of $\mathbb{T}^4$.
  \end{center}

  By the above example, we have the following theorem:

  \begin{theo}
   Let $\mathbb{T}^4$ be the standard torus
  and $(g_0,J_0,\omega_0)$ be a standard flat K\"{a}hler structure on $\mathbb{T}^4$.
  Set $$A_k=e^{\frac{1}{k}\sin 2\pi(x^1+x^3)},\,\,\, B_k=e^{\frac{1}{k}\sin 2\pi(x^1+x^4)},
   \,\,\, C_k=(\frac{A_k}{B_k})^{\frac{1}{2}}$$
  and $D_k=(\frac{1}{A_k B_k})^{\frac{1}{2}}$.
  Construct the almost K\"{a}hler structures $(g_k,J_k,\omega_0)$ by
  $$
  J_kdx^1=C_kdx^2,\,\,\,J_kdx^2=-\frac{1}{C_k}dx^1,\,\,\,J_kdx^3=D_kdx^4,\,\,\,J_kdx^4=-\frac{1}{D_k}dx^3,
  $$
  and $g_k(\cdot,\cdot)=\omega_0(\cdot,J_k\cdot)$.
  Then $J_k\rightarrow J_0$, $g_k\rightarrow g_0$ as $k\rightarrow \infty$.
  However, $h^-_{J_k}=0$ and $h^+_{J_k}=6$.
  \end{theo}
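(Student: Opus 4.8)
The plan is to recognize each $J_k$ as a member of the family constructed in Example~\ref{example}, obtained by replacing the deformation data $A,B,C,D$ with $A_k,B_k,C_k,D_k$, and then to rerun that example's argument verbatim. First I would check that $A_k,B_k,C_k,D_k$ satisfy the same algebraic relations used there, namely $C_k^2=A_k/B_k$ and $D_k^2=1/(A_kB_k)$; these are immediate from the definitions. Consequently $J_k$ is a genuine almost complex structure compatible with $\omega_0$, and the bundle decomposition $\Lambda^\pm_{J_k}$, the metric $g_k=\omega_0(\cdot,J_k\cdot)$, and the space $\mathcal H^+_{g_k}$ all have exactly the forms displayed in Example~\ref{example}, with $A,B$ replaced throughout by $A_k,B_k$.

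For the convergence statement I would note that $\tfrac1k\sin 2\pi(x^1+x^3)$ and $\tfrac1k\sin 2\pi(x^1+x^4)$ tend to $0$ in the $C^\infty$-topology as $k\to\infty$, since every derivative carries a factor $\tfrac1k$ times a bounded trigonometric polynomial. Hence $A_k,B_k,C_k,D_k\to1$ in $C^\infty$, and because the entries of $J_k$ and $g_k$ are built from these functions, $J_k\to J_0$ and $g_k\to g_0$ in $C^\infty$, recovering the flat K\"ahler data (for which $h^-_{J_0}=2$) in the limit.

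The heart of the proof is that $h^-_{J_k}=0$ for each fixed $k$. Here I would invoke the general facts recalled earlier: $\mathcal Z^-_{J_k}\subset\mathcal H^+_{g_k}$, and since $\omega_0$ is the fundamental form of $J_k$, every $J_k$-anti-invariant $2$-form $\alpha$ satisfies $\alpha\wedge\omega_0=0$ pointwise. As $\mathcal H^+_{g_k}$ is the $3$-dimensional space spanned by $\omega_0$ together with $\omega_1^{(k)}=(\tfrac1{1+A_k}-c_{A_k})\omega_0+\tfrac1{1+A_k}(e^{14}+A_ke^{23})$ and $\omega_2^{(k)}=(\tfrac1{1+B_k}-c_{B_k})\omega_0+\tfrac1{1+B_k}(B_ke^{13}-e^{24})$, any $\alpha\in\mathcal Z^-_{J_k}$ is a constant-coefficient combination $a_0\omega_0+a_1\omega_1^{(k)}+a_2\omega_2^{(k)}$. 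Wedging with $\omega_0$ and dividing by $2e^{1234}$, the pointwise vanishing condition becomes the functional identity
\[
a_0+a_1\Bigl(\tfrac1{1+A_k}-c_{A_k}\Bigr)+a_2\Bigl(\tfrac1{1+B_k}-c_{B_k}\Bigr)\equiv0 .
\]
Showing $a_0=a_1=a_2=0$ then yields $\mathcal Z^-_{J_k}=\{0\}$, whence $h^-_{J_k}=\dim H^-_{J_k}=0$ and, since $b_2=b^++b^-=6$, also $h^+_{J_k}=b_2-h^-_{J_k}=6$.

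The step I expect to be the main (indeed the only) obstacle is justifying that the displayed identity forces all three coefficients to vanish. The key observation is that $\tfrac1{1+A_k}$ is a non-constant function of $u:=x^1+x^3$ while $\tfrac1{1+B_k}$ is a non-constant function of $v:=x^1+x^4$, and that $(u,v)$ are functionally independent on $\mathbb T^4$: for fixed $u,v$ one may still choose $x^1$ freely and solve $x^3=u-x^1$, $x^4=v-x^1$, so the assignment $(x^1,x^2,x^3,x^4)\mapsto(u,v)$ is a surjective homomorphism onto $\mathbb T^2$. Thus a relation $a_1 f(u)+a_2 g(v)\equiv -a_0$ with $f,g$ non-constant separates into $u$-dependent and $v$-dependent pieces that must each be constant, forcing $a_1=a_2=0$ and then $a_0=0$. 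The only thing to verify is that $A_k,B_k$ stay non-constant for every finite $k$, which holds because $\tfrac1k\sin 2\pi(\cdot)$ is non-constant for all $k$; hence the argument that worked for the undeformed structure in Example~\ref{example} applies unchanged to each $J_k$ in the sequence.
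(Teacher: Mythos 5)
Your proposal is correct and is essentially the paper's own argument: the paper proves the theorem simply by citing Example~\ref{example}, i.e.\ by rerunning that computation with $A,B$ replaced by $A_k,B_k$, exactly as you do. Your explicit functional-independence argument (that $a_0+a_1(\tfrac{1}{1+A_k}-c_{A_k})+a_2(\tfrac{1}{1+B_k}-c_{B_k})\equiv 0$ forces $a_0=a_1=a_2=0$ because $u=x^1+x^3$ and $v=x^1+x^4$ vary independently) is precisely the step the paper leaves implicit in the sentence ``so we can obtain $\mathcal{Z}^-_J=\varnothing$,'' so it is a welcome filling-in of detail rather than a different route.
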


 \section{Proof of main result}\setcounter{equation}{0}

  In this section we prove the Main Theorem.
  Let us first describe the $C^\infty$-topology
  on the space $\mathcal {J}$ of $C^\infty$ almost complex structures on $M$.
  For $k=0,1,2,\cdots$, the space $\mathcal{J}^k$ of $C^k$ almost complex structures on $M$
  has a natural separable Banach manifold structure.
  The natural $C^\infty$-topology on $\mathcal{J}$ is induced by
  the sequence of $C^k$ semi-norms $\|\cdot\|_k$, $k=0,1,2,\cdots$.
  With this $C^\infty$-topology, $\mathcal{J}$ is a Fr\'{e}chet manifold.
  A complete metric which induces the $C^\infty$-topology on $\mathcal{J}$
  is defined by
  $$ d(J_1,J_2)=\sum_{k=0}^\infty\frac{\|J_1-J_2\|_k}{2^k(1+\|J_1-J_2\|_k)}. $$
  For details, see \cite{Au,DLZ2}.
  At first, we will prove the openness statement of Main Theorem.
   Please see the following proposition:
  \begin{prop}\label{open prop}
   Let $(M,\omega)$ be a closed symplectic $4$-manifold.
  Then the set of $\omega$-compatible almost complex structures $J$ on $M$ with $h^-_J=0$ is an open
  subset of $\mathcal{J}_{\omega}^{\rm c}$ in the $C^\infty$-topology.
  \end{prop}
  \begin{proof}
 In the proof of Theorem $1.1$ in \cite{TWZZ}, we have gotten that the set of almost complex structures $J$ on $M$ with $h_J^-=0$
 is an open subset of $\mathcal{J}$ in the $C^\infty$-topology.
 Note that $\mathcal{J}_{\omega}^{\rm c}$ is a subspace of $\mathcal{J}$ in the $C^\infty$-topology.
 Using the fact that the space of $\omega$-compatible $J$ inherits the subspace topology,
 we can get that the set of $\omega$-compatible almost complex structures $J$ on $M$ with $h^-_J=0$ is an open
  subset of $\mathcal{J}_{\omega}^{\rm c}$ in the $C^\infty$-topology.
 \end{proof}

  With the above proposition, we can get the openness statement in Main Theorem.
  It remains to prove the denseness statement of Main Theorem.
  In the following section, we will give the proof of the denseness statement.

  \vskip 6pt

  Suppose $(M,g,J,\omega)$ is a closed almost K\"{a}hler $4$-manifold.
  To prove the denseness statement,
  we may consider a family $J_t$ of almost complex structures on $M$ which is a deformation of $J$,
  that is, $J_t \rightarrow J$ in the $C^{\infty}$-topology as $t \rightarrow 0$.
  If $h_J^-=0$, then as noted in \cite{DLZ2},
   we can establish path-wise semi-continuity property for $h_J^-$
  which follows directly from Lemma \ref{Lejmi lemma}
  and a classical result of Kodaira and Morrow (\cite[Theorem 4.3]{KoMo})
  showing the upper semi-continuity of the kernel of a family of elliptic differential operators.
  Therefore $h_{J_t}^-=0$ for small $t$.

  \vskip 6pt

  We now assume that $m\triangleq h_J^- \ge 1$.
  Let $\alpha_1,\cdots,\alpha_m \in \mathcal Z_J^-$ be such that
  $\alpha_1,\cdots,\alpha_m$ is an orthonormal basis of $H^-_J(\cong\mathcal Z_J^-)$ with respect to the cup product.
  Clearly, $1\leq m\leq b^+-1$.
   Define $H_{J,0}^{-,\perp}\subsetneqq H_J^{-,\perp} $ to be
  $$H_{J,0}^{-,\perp}\triangleq\{\beta=f\omega+d^-_J\gamma\in H_J^{-,\perp} :\gamma\in\Omega^1,\int_Mfdvol_g=0 \}.$$
   Here $d^-_J\triangleq P^-_J\circ d$.
    Since $\omega\notin H_{J,0}^{-,\perp}$,
 then it is easy to get the following decompositions,
 \begin{eqnarray*}
   \mathcal{H}^+_g &=&\mathcal Z_J^-\oplus H_J^{-,\perp}  \\
    &=& \mathcal Z_J^-\oplus\mathbb{R}\cdot\omega\oplus H_{J,0}^{-,\perp}.
 \end{eqnarray*}

  By making the local deformation of $\omega$-compatible almost complex structures on $(M,\omega)$ due to gluing operation (\cite{Taubes,Wang}),
  we have the following proposition:
  \begin{prop}\label{dense prop}
  Let $(M,g,J,\omega)$ be a closed almost K\"{a}hler $4$-manifold with $h^-_J\geq 1$.
  There exists a sequence of $\omega$-compatible almost complex structures, $\{J_{k_m}\}$,
  on $M$ such that $J_{k_m}\rightarrow J$ as $k_m\rightarrow\infty$ and $h^-_{J_{k_m}}=0$.
  \end{prop}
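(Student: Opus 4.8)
The plan is to produce, for every large $k$, an explicit $\omega$-compatible almost complex structure $J_k$ close to $J$ with $\mathcal Z_{J_k}^-=\{0\}$, directly generalizing the deformation of Section~2; letting $k\to\infty$ then yields the required sequence. The organizing principle is the pointwise obstruction used in Example~\ref{example}: if $J'\in\mathcal J_\omega^{\mathrm c}$ and $g'=\omega(\cdot,J'\cdot)$, then a $2$-form lies in $\mathcal Z_{J'}^-$ iff it is $g'$-harmonic, self-dual, and pointwise orthogonal to $\omega$. Indeed, by \eqref{206} the self-dual bundle splits pointwise as $\Lambda_{g'}^+=\mathbb R\,\omega\oplus\Lambda_{J'}^-$, so for a self-dual $\beta$ the scalar $\langle\beta,\omega\rangle$ vanishes identically exactly when $\beta$ is $J'$-anti-invariant. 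Hence, writing $W_{g'}(\beta)\,dvol_{g'}=\beta\wedge\omega$, we obtain the clean reformulation
\[
h^-_{J'}=\dim\{\beta\in\mathcal H_{g'}^+ : W_{g'}(\beta)\equiv 0\}.
\]
Since $W_{g'}(\omega)=|\omega|_{g'}^2\equiv 2$ and $\int_M W_{g'}(\beta)\,dvol_{g'}=[\beta]\cup[\omega]$ is cohomological, the problem reduces to arranging that the $(b^+-1)$-dimensional space of \emph{primitive} self-dual $g'$-harmonic forms maps injectively, under $W_{g'}$, into the space of mean-zero functions.

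Second, I would construct the deformation. Using the orthonormal basis $\alpha_1,\dots,\alpha_m$ of $\mathcal Z_J^-$ as a guide, I would choose non-constant functions $\rho_1,\dots,\rho_m$ on $M$ and twist the rank-two bundle $\Lambda_J^-$ by $e^{t\rho}$-type factors, exactly as the functions $A,B,C,D$ twist the coordinate anti-invariant forms on $\mathbb T^4$, defining a smooth family $J_t\in\mathcal J_\omega^{\mathrm c}$ with $J_0=J$ and then setting $J_k:=J_{1/k}$, so that $J_k\to J$ in the $C^\infty$-topology. The functions are to be chosen so that the deformation perturbs, to leading order, the $\omega$-traces $W_{g_t}$ of the self-dual harmonic representatives of the classes $[\alpha_i]$ away from the identically-zero functions they give at $t=0$.

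Third, with the family in hand, I would verify $\ker W_{g_k}=\{0\}$. Let $\eta_i^t$ denote the self-dual $g_t$-harmonic representative of $[\alpha_i]$; at $t=0$ one has $\eta_i^0=\alpha_i$ and $W_{g_0}(\alpha_i)\equiv 0$. The goal is to show that for the chosen twist the functions $W_{g_k}(\eta_i^{1/k})$, together with the traces of a basis of $H_{J,0}^{-,\perp}$, are linearly independent, so that no nontrivial primitive combination has identically-zero $\omega$-trace; on $\mathbb T^4$ this is precisely the linear independence of $\tfrac1{1+A}-c_A$ and $\tfrac1{1+B}-c_B$ established in Example~\ref{example}. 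Combined with upper semi-continuity of $h^-_J$ (the Remark following Lemma~\ref{Lejmi lemma}), injectivity of $W_{g_k}$ forces $h^-_{J_k}=0$.

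The main obstacle is exactly this last step for a general (non-flat) $M$: unlike on $\mathbb T^4$, the self-dual $g_t$-harmonic forms solve a metric-dependent elliptic system and cannot be written down explicitly, so one must control how their $\omega$-traces move under the deformation. Moreover a naive infinitesimal argument is unavailable: every $\omega$-compatible $J'$ gives $\langle P_{J'}\psi,\psi\rangle=\|\delta_{g'}\psi\|^2_{L^2(g')}\ge 0$, so the small eigenvalues of $P_{J_t}$ sit at a minimum at $t=0$ and have vanishing first derivative, and the zero eigenvalues cannot be pushed off at first order. One is therefore forced either into a second-order (Kato-type) analysis of $P_{J_t}$ on $\ker P_J$, or, as in Section~2, into a genuinely finite explicit construction whose injectivity of $W_{g_k}$ is checked by hand; one must also ensure that the twist does not create new $g_k$-harmonic anti-invariant forms elsewhere on $M$. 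This verification is the technical heart of the argument.
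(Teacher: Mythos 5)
Your opening reformulation is correct and is exactly the paper's starting point: for $\omega$-compatible $J'$ one has $\Lambda^+_{g'}=\mathbb{R}\omega\oplus\Lambda^-_{J'}$, so $\mathcal Z^-_{J'}$ is the space of self-dual $g'$-harmonic forms whose pointwise inner product with $\omega$ vanishes, and $h^-_{J'}=\dim\mathcal Z^-_{J'}$. But beyond that the proposal has a genuine gap, which you acknowledge yourself by calling the final verification ``the technical heart'' and leaving it open. In fact the difficulty starts one step earlier: the $\mathbb T^4$ twist is an intrinsically coordinate-dependent construction ($Jdx^1=Cdx^2$, etc.), and on a general closed symplectic $4$-manifold there is no global frame in which to imitate it; note also that rescaling sections of $\Lambda^-_J$ by $e^{t\rho}$-type factors does not move the subbundle $\Lambda^-_J\subset\Lambda^2$ and therefore does not change $J$ at all --- the torus construction works because the functions $A,B$ genuinely rotate $\Lambda^-_J$ against the fixed coordinate $2$-forms. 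So neither the construction of the family $J_t$ nor the claim $\ker W_{g_k}=\{0\}$ is actually supplied, and as you correctly observe, a first-order perturbation argument cannot supply it since the relevant eigenvalues sit at a minimum at $t=0$.

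The paper resolves both problems by localizing and iterating rather than deforming globally in one shot. Fix $p\in M$ and a Darboux ball $U_p$ with ${\rm vol}(U_p)<\mu_J/3$, where $\mu_J=\inf_{\beta\in S_J}{\rm vol}(M\setminus f^{-1}(0))>0$ is the quantitative lower bound from \cite{TWZZ}; first interpolate the metric so that the compatible structure $J'_k$ equals the standard integrable $J_0$ on $\{|x|\le\frac1k\}$, then perform the torus twist only inside that flat ball (with cutoff) to define $J_k$, glued to $J'_k$ outside. Three ingredients absent from your outline make the verification possible: (i) on the flat ball, any closed $J'_k$-anti-invariant form becomes, after a rotation of the Darboux coordinates, a \emph{constant} multiple of $d\xi^1\wedge d\xi^3-d\xi^2\wedge d\xi^4$ (C.~B\"ar's nodal-set theorem plus closedness), so the effect of the twist on it is computed explicitly exactly as on $\mathbb T^4$, with no need for global harmonic representatives; (ii) the bound ${\rm vol}(U_p)<\mu_J/3$ guarantees that every class already in $H^{-,\perp}_{J,0}$ still has $f_j\omega^2\not\equiv0$ outside $U_p$ and hence stays in $H^{-,\perp}_{J_k,0}$ --- this is precisely your worry that the twist might create new anti-invariant harmonic forms elsewhere, and it is disposed of by the volume estimate, not by linear independence of traces; (iii) elliptic boundary-value problems on the ball, together with the claim $P^+_{g_k}d\delta_{g_k}\gamma'_{l+1}\equiv0$ near $p$, produce harmonic representatives showing that $[\alpha'_m]$ now pairs nontrivially with $\omega$ and so has left $H^-_{J_k}$. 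This yields $h^-_{J_k}\le h^-_J-1$ (Lemma \ref{lemma a}), and the Proposition follows by iterating at most $m$ times --- killing one dimension per step, which is exactly the statement your one-shot global deformation cannot certify.
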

  \begin{proof}
  Without loss of generality, we may assume that
  $1\leq h^-_J=m\leq b^+-1$, then let $\{\beta_j=f_j\omega+d^-_J\gamma_j\}$, $1\leq j\leq l(\triangleq b^+-1-m)$,
  be an orthonormal basis of $H_{J,0}^{-,\perp}$ with respect to the cup product.
  We have that $\int_Mf_jdvol=0$ and $f_j\not\equiv 0$.

  First step, we will construct a family of almost complex structures $\{J'_k\}$ which is a local deformation of $J$
  with $\dim H_{J'_k,0}^{-,\perp}\geq l$.
  If $h^-_J=m=b^+-1$, that is, $l=0$,
  then any deformation $\{J'_{k}\}$ of $J$ satisfies $\dim H_{J'_k,0}^{-,\perp}\geq l=0$.
  Hence, we just have to handle the case of $h^-_J<b^+-1$.

   If $1\leq h^-_J<b^+-1$, set
  \begin{align}
  S_J := \{ \beta \in H_{J,0}^{-,\perp} \mid \textstyle\int_M \beta^2 = 1 \}.
  \end{align}
  Then $S_J$ is a sphere of dimension $l-1$.
  Define a function $V:S_J\rightarrow \mathbb{R}$ as follows: for any
  $\beta=f\omega+d^-_J\gamma \in S_J$,
  \begin{align}
  V(\beta) := {\rm vol}\,(M\setminus f^{-1}(0)) = \textstyle\int_{M\setminus f^{-1}(0)}dvol_g.
  \end{align}
  Denote by
  \begin{align}\label{deltaJ}
  \mu_J\triangleq\inf_{\beta \in S_J} V(\beta).
  \end{align}
  By the result in \cite{TWZZ} (cf. \cite[formula (3.5)]{TWZZ}), we know that $\mu_J>0$.
  Let $M'\triangleq \bigcap^m_{i=1}(M\setminus\alpha^{-1}_i(0))$,
  $M'$ is an open dense set in $M$.
  Fix a point $p\in M'$,
  the fundamental theorem of Darboux \cite{Au} shows that there are a neighborhood $U_p$ of $p$ and diffeomorphism $\Phi$
 from $U_p$ onto $\Phi(U_p)\subset\mathbb{C}^2=\mathbb{R}^4$ such that $\omega|_{U_p}=\Phi^*\omega_0$, where $\Phi(p)=0\in\mathbb{C}^2$
 and
 \begin{eqnarray}\label{5eq1}
   \omega_0 &=& \frac{\sqrt{-1}}{2}(dz^1\wedge d\overline{z^1}+dz^2\wedge d\overline{z^2}). \nonumber
 \end{eqnarray}
  We can choose $U_p$ small enough
  such that ${\rm vol}(U_p)<\frac{1}{3}\mu_J$.
   Denote by $J_p$ the pull back of $J_0$ on $U_p$,
   that is, $J_p\triangleq\Phi^*J_0$,
   where $J_0$ is the standard complex structure on $\mathbb{C}^2$.
   At point $p$, we have $J(p)=J_p(p)$.
   Set $g_p(\cdot,\cdot)=\omega(\cdot,J_p\cdot)$ on $U_p$.
   On the other hand, we know that $g(\cdot,\cdot)=\omega(\cdot,J\cdot)$.
   So we can get
   $$
   g|_{U_p}=g_p|_{U_p}\cdot e^h,
   $$
   here $h$ is a symmetric $J$-anti-invariant $(2,0)$ tensor (cf. \cite{Kim}).
   Construct metric
   $$
   g'_k=g_p\cdot e^{(1-\varphi_k)h}
   $$
   using Darboux coordinate chart $\{x^1,x^2,x^3,x^4\}$ on $U_p$,
   where
   \begin{equation}
  \varphi_k(x)=\left\{
    \begin{array}{ll}
    1, & ~ |x|\leq\frac{1}{k} \\
      &   \\
    0, & ~ |x|\geq\frac{2}{k}.
   \end{array}
  \right.
  \end{equation}
  We can extend $g'_k$ to the whole manifold by $g'_k|_{|x|\leq\frac{1}{k}}=g_p$ and
  $g'_k|_{M\setminus\{|x|\leq\frac{2}{k}\}}=g$ by using gluing operation (cf. \cite{Taubes, Wang}).
  By $g'_k$ and $\omega$, we can get the unique almost complex structure $J'_k$ such that
  $(g'_k,J'_k,\omega)$ is an almost K\"{a}hler structure on $M$, and
  $g'_k\rightarrow g, J'_k\rightarrow J$ as $k\rightarrow\infty$.

  We know that if $h^-_J<b^+-1$, then $H_{J,0}^{-,\perp}\neq \{0\}$.
  Since
  $$
  \Lambda^+_{g'_k}=\mathbb{R}\omega\oplus\Lambda^-_{J'_k}\,\,\, {\rm and}
  \,\,\,\Lambda^+_{J'_k}=\mathbb{R}\omega\oplus\Lambda^-_{g'_k},
  $$
  and
  we have
    \begin{eqnarray}
      \beta_j &=& f_j\omega+d^-_J\gamma_j \nonumber\\
      &=&f_j\omega+P^-_{J'_k}d^-_J\gamma_j+P^+_{J'_k}d^-_J\gamma_j  \nonumber\\
       &=& f_j\omega+P^-_{J'_k}d^-_J\gamma_j+P^-_{g'_k}d^-_J\gamma_j.
    \end{eqnarray}
    We get $P^-_{g'_k}d^-_J\gamma_j|_{M\setminus\{|x|\leq\frac{2}{k}\}}\equiv 0$
    since $g'_k|_{M\setminus\{|x|\leq\frac{2}{k}\}}=g$ and $d^-_J\gamma_j|_{M\setminus\{|x|\leq\frac{2}{k}\}}\in \Lambda^+_{g'_k}$.
    Moreover, $P^-_{g'_k}d^-_J\gamma_j\not\equiv 0$ on $\{|x|\leq\frac{2}{k}\}$
    and $P^-_{g'_k}d^-_J\gamma_j|_{\{|x|=\frac{2}{k}\}}=0$.
     Let $D'=\{|x|\leq\frac{2}{k}\}$.
    By \cite{DK}, we know that
  $$
  P^-_{g'_k}d\delta_{g'_k}:\Omega^-_{g'_k}(D')\longrightarrow\Omega^-_{g'_k}(D')
  $$
  is a self--adjoint strongly elliptic operator.
   Hence we can solve the following Dirichlet problem of
   $g'_k$-anti-self-dual equations (cf. \cite{DK}):
   \begin{equation}\label{P^-_g 2}
   \left\{
  \begin{array}{ll}
  P^-_{g'_k}d\delta_{g'_k}\eta_{j,k}=P^-_{g'_k}d^-_J\gamma_j, & ~ {\rm on}\,\,\, D'  \\

  \eta_{j,k}|_{\partial D'}=0 .
  \end{array}
  \right.
  \end{equation}
   By the standard elliptic theory (cf. \cite{GT}),
  there exists a unique solution $\eta_{j,k}\in\Omega^-_{g'_k}(D')$ satisfying Equations (\ref{P^-_g 2}).
  Therefore,
  $$
  \beta_j=f_j\omega+P^-_{J'_k}d^-_J\gamma_j+d\delta_{g'_k}\eta_{j,k}-d^+_{g'_k}\delta_{g'_k}\eta_{j,k},
  $$
  $1\leq j\leq l$.
  Denote by $\tilde{\beta}_j\triangleq \beta_j-d\delta_{g'_k}\eta_{j,k}\in\mathcal{H}^+_{g'_k}(M)$,
 it is clear that
  $$\tilde{\beta}_j=f_j\omega+P^-_{J'_k}d^-_J\gamma_j-d^+_{g'_k}\delta_{g'_k}\eta_{j,k}\in\Omega^+_{g'_k}(M)$$
  and $d\tilde{\beta}_j=0$,
   then $\tilde{\beta}_j$ and $\beta_j$ are in the same cohomology class, i.e.,
   $[\tilde{\beta}_j]=[\beta_j]$.
  \begin{eqnarray}
    \tilde{\beta}_j|_{M\setminus \{|x|\leq\frac{2}{k}\}} &=& \beta_j|_{M\setminus \{|x|\leq\frac{2}{k}\}}  \nonumber\\
     &=& (f_j\omega+d^-_J\gamma_j)|_{M\setminus \{|x|\leq\frac{2}{k}\}}.
  \end{eqnarray}
  Since ${\rm vol}(M\setminus f^{-1}_j(0))\geq \mu_J$ and  ${\rm vol}(U_p)<\frac{1}{3}\mu_J$,
  we get, on $M\setminus U_p$,
  $$
  \tilde{\beta}_j\wedge\omega=\beta_j\wedge\omega=f_j\omega^2\not\equiv 0.
  $$
  Hence $\tilde{\beta}_j$ contains a non-trivial element $\tilde{\beta}_{j,k}=\tilde{f}_{j,k}\omega+P^-_{J'_k}d\tilde{\gamma}_{j,k}\in H_{J'_k,0}^{-,\perp}$,
  where $\int_M\tilde{f}_{j,k}dvol_{g'_k}=0$ and ${\rm vol}\,((M\setminus \tilde{f}_{j,k}^{-1}(0))\cap(M\setminus U_p))\geq\frac{2}{3}\mu_J$
  for $1\leq j\leq l$.
  So $\dim \mathcal{Z}^-_{J'_k}\leq m$, $\dim H_{J'_k,0}^{-,\perp}\geq l$ and
  $Span\{\tilde{\beta}_{1,k},\cdot\cdot\cdot,\tilde{\beta}_{l,k}\}\subseteq H_{J'_k,0}^{-,\perp}$.

   \vskip 6pt

  Second step, we will deform $J'_k$ to $J_k$ satisfied $\dim H_{J_k,0}^{-,\perp}\geq l+1$ on $U_p$.
  The deformation is constructed similarly as the one in Example \ref{example}.
   Then by using gluing operation (cf. \cite{Taubes, Wang}),
  we have the following lemma:

  \begin{lem}\label{lemma a}
  There exists a sequence of $\omega$-compatible almost complex structures
  $\{J_k\}$ such that $J_k\rightarrow J$ as $k\rightarrow \infty$ and $\dim H_{J,0}^{-,\perp}\leq\dim H_{J'_k,0}^{-,\perp}\leq\dim H_{J_k,0}^{-,\perp}-1$,
  that is, $h^-_J-1\geq h^-_{J'_k}-1\geq h^-_{J_k}$.
  \end{lem}

  The above lemma will be proved later.
  \begin{rem}
  In the process of proving {\rm\cite[Theorem 1.1]{TWZZ}} which can be considered as the taming case,
  the following proposition play a key role.
  Suppose that $(M,g,J,\omega)$ is a closed almost K\"{a}hler $4$-manifold.
  If $J'$ is a $g$-related almost complex structure on $M$ with $J'\neq \pm J$,
  then $\dim(H^-_J\cap H^-_{J'})\leq 1$ ({\rm cf. \cite[Proposition 3.7]{DLZ2}}).
  But here, the compatible case, the above proposition does not work.
  Fortunately, {\rm Lemma \ref{lemma a}} will play a key role in the proof of our {\rm Main Theorem}.
  \end{rem}

  Now, let us
  return to the proof of Proposition \ref{dense prop}.
  With Lemma \ref{lemma a},
  we denote by $J_{k_1}\triangleq J_k$.
  Similar to the above discussion,
  after finite steps (at most for $m$ steps),
  we can construct almost complex structures $J_{k_m}$ on $M$
  such that $J_{k_m}\rightarrow J$ in $C^\infty$-topology as $k_m\rightarrow\infty$
  and $\dim H_{J_{k_m},0}^{-,\perp}=b^+-1$, that is,
  $h^-_{J_{k_m}}=0$.
  This completes the proof of Proposition \ref{dense prop}.
   \end{proof}

   From Proposition \ref{open prop} and \ref{dense prop},
   it is easy to get Main Theorem.

 \vskip 12pt

  In the remainder section, we will give the proof of Lemma \ref{lemma a}.

   \vskip 6pt

   \noindent {\bf Proof of Lemma \ref{lemma a}.}
   We have obtained $h^-_{J'_k}\leq h^-_J=m$ and $\dim H_{J'_k,0}^{-,\perp}\geq l$.
    If $\dim H_{J'_k,0}^{-,\perp}>l$, Lemma \ref{lemma a} holds automatically.
  Hence, we just have to prove the case of $\dim H_{J'_k,0}^{-,\perp}=l$.
  Let $\alpha'_1,\cdots,\alpha'_m \in \mathcal{Z}^-_{J'_k}$ be such that
  $\alpha'_1,\cdots,\alpha'_m$ is an orthonormal basis of $\mathcal{Z}^-_{J'_k}$ with respect to the cup product.
  Suppose $\{\beta'_1,\cdots,\beta'_l\}$
  is an orthonormal basis of $H_{J'_k,0}^{-,\perp}$ with respect to the cup product.

  The main idea of the following part is that for each $k\in\mathbb{N}$,
  we will construct a $1$-parameter family of almost complex structures $\{J^{\lambda}_k\}$, $\lambda\in[1,+\infty)$,
  which is a deformation of $J$ with $\dim H_{J^{\lambda}_k,0}^{-,\perp}\geq l+1$.
  The concrete construction of $J^{\lambda}_k$ is provided by rotating the Darboux coordinates and reforming the almost complex structure $J'_k$.

  First step, fixed $k\in\mathbb{N}$, we will construct the specific deformation $\{J^{\lambda}_k\}$, $\lambda\in[1,+\infty)$.

  Note that $H_{J'_k,0}^{-,\perp}$ is also spanned by
  $\{\tilde{\beta}_{1,k},\cdot\cdot\cdot,\tilde{\beta}_{l,k}\}$.
  Hence,
    $\mu_{J'_k}\geq\frac{2}{3}\mu_J$.
  Since $J'_k|_{|x|\leq\frac{1}{k}}=J_p(\triangleq\Phi^*J_0)$,
  it is easy to see that
  $\Lambda^-_{J'_k}$ is spanned by $\{dx^1\wedge dx^3-dx^2\wedge dx^4,dx^1\wedge dx^4+dx^2\wedge dx^3\}$ on $\{|x|\leq\frac{1}{k}\}$.
  Then $\alpha'_m|_{|x|\leq\frac{1}{k}}$ can be written as
  $$\alpha'_m|_{|x|\leq\frac{1}{k}}=L_1(x)(dx^1\wedge dx^3-dx^2\wedge dx^4)+L_2(x)(dx^1\wedge dx^4+dx^2\wedge dx^3),$$
  where $L_1(x)$ and $L_2(x)$ are smooth functions on $\{|x|\leq\frac{1}{k}\}$.
  By C. B\"{a}r's result (cf. \cite{Ba}),
  the set $\alpha'^{-1}_m(0)$ has Hausdorff dimension $\leq 2$.
  So we can choose a small open set $V\subseteq\{|x|\leq\frac{1}{k}\}$ such that
  $L^2_1(x)+L^2_2(x)|_V>0$.
  Without loss of generality,
  we assume $V=\{|x|\leq\frac{1}{k}\}$ and $L_1\not\equiv 0$ on $V$.
  We make a rotation for the Darboux coordinates $\{x^1,x^2,x^3,x^4\}$ such that
  $$
  dx^1=d\xi^1\cos\theta_1-d\xi^2\sin\theta_1,\,\,\, dx^2=d\xi^1\sin\theta_1+d\xi^2\cos\theta_1,
  $$
  $$
  dx^3=d\xi^3\cos\theta_2-d\xi^4\sin\theta_2, \,\,\, dx^4=d\xi^3\sin\theta_2+d\xi^4\cos\theta_2.
  $$
  We can find that
  $\omega|_{|x|\leq\frac{1}{k}}=d\xi^1\wedge d\xi^2+d\xi^3\wedge d\xi^4$
  and
  $|\xi|=|x|$.
  \begin{eqnarray*}
    \alpha'_m|_{|x|\leq\frac{1}{k}}
     &=& [L_1\cos(\theta_1+\theta_2)+L_2\sin(\theta_1+\theta_2)](d\xi^1\wedge d\xi^3-d\xi^2\wedge d\xi^4) \\
     &~& +[L_2\cos(\theta_1+\theta_2)-L_1\sin(\theta_1+\theta_2)](d\xi^1\wedge d\xi^4+d\xi^2\wedge d\xi^3).
  \end{eqnarray*}
 Choose some $\theta_1, \theta_2$ such that
  $L_2\cos(\theta_1+\theta_2)=L_1\sin(\theta_1+\theta_2)$.
  So
  $$\alpha'_m|_{|\xi|\leq\frac{1}{k}}=L(d\xi^1\wedge d\xi^3-d\xi^2\wedge d\xi^4),$$
  where $L\triangleq L_1\cos(\theta_1+\theta_2)+L_2\sin(\theta_1+\theta_2)$.
  Since $d\alpha'_m=0$,
   we obtain that $L$ is a nonzero constant on $\{{|\xi|\leq\frac{1}{k}}\}$.
   Moreover, we can assume that $L=1$.
   As in Section \ref{specific example}, for any $\lambda\in[1,+\infty)$, let
  $$
   A_{(k,\lambda)}(\xi)=e^{\phi_k(\xi)\sin 2\pi\lambda(\xi^1+\xi^3)},\,\,\,B_{(k,\lambda)}(\xi)=e^{\phi_k(\xi)\sin 2\pi\lambda(\xi^1+\xi^4)},
  $$
  $$C_{(k,\lambda)}(\xi)=e^{\frac{\phi_k(\xi)}{2}\sin 2\pi\lambda(\xi^1+\xi^3)-\frac{\phi_k(\xi)}{2}\sin 2\pi\lambda(\xi^1+\xi^4)},$$
  and
  $$D_{(k,\lambda)}(\xi)=e^{-\frac{\phi_k(\xi)}{2}\sin 2\pi\lambda(\xi^1+\xi^3)-\frac{\phi_k(\xi)}{2}\sin 2\pi\lambda(\xi^1+\xi^4)}.$$
  Here,
  \begin{equation}\label{cut-off}
  \phi_k(\xi)=\left\{
    \begin{array}{ll}
    1, & ~ |\xi|\leq\frac{1}{2k}, \\
      &   \\
    0, & ~ |\xi|\geq\frac{1}{k}.
   \end{array}
  \right.
  \end{equation}
  It is easy to see that
  $$
  A_{(k,\lambda)}(\xi)|_{|\xi|=\frac{1}{k}}=B_{(k,\lambda)}(\xi)|_{|\xi|=\frac{1}{k}}
  =C_{(k,\lambda)}(\xi)|_{|\xi|=\frac{1}{k}}=  D_{(k,\lambda)}(\xi)|_{|\xi|=\frac{1}{k}}=1.
  $$
  Define an almost complex structure $J^{\lambda}_k$ by gluing operation (cf. \cite{Taubes,Wang}) as follows:
  $ J^{\lambda}_k|_{|\xi|\geq\frac{1}{k}}=J'_k$
  and on ${|\xi|\leq\frac{1}{k}}$,
  \begin{equation*}
      J^{\lambda}_kd\xi^1=C_{(k,\lambda)}(\xi)d\xi^2,\,\,\,J^{\lambda}_kd\xi^2=-\frac{1}{C_{(k,\lambda)}(\xi)}d\xi^1,
  \end{equation*}
    \begin{equation}\label{J_k}
     J^{\lambda}_kd\xi^3=D_{(k,\lambda)}(\xi)d\xi^4,\,\,\,J^{\lambda}_kd\xi^4=-\frac{1}{D_{(k,\lambda)}(\xi)}d\xi^3.
   \end{equation}
   It is easy to see that $J^{\lambda}_k$ is compatible with $\omega$
   and $J^{\lambda}_k|_{|\xi|=\frac{1}{k}}=J_0$,
  where $J_0$ is the standard complex structure on $\mathbb{R}^4\cong \mathbb{C}^2$.
  Moreover, on ${|\xi|\leq\frac{1}{k}}$, we have
  $$\Lambda^-_{J^{\lambda}_k}=Span\{B_{(k,\lambda)}(\xi)d\xi^1\wedge d\xi^3-d\xi^2\wedge d\xi^4,\,\,\,d\xi^1\wedge d\xi^4+A_{(k,\lambda)}(\xi)d\xi^2\wedge d\xi^3\}.$$
  Set
  \begin{equation}\label{g_k}
    g^{\lambda}_k\triangleq\omega(\cdot,J^{\lambda}_k\cdot)=\frac{1}{C_{(k,\lambda)}}d\xi^1\otimes d\xi^1+C_{(k,\lambda)}d\xi^2\otimes d\xi^2
    +\frac{1}{D_{(k,\lambda)}}d\xi^3\otimes d\xi^3+D_{(k,\lambda)}d\xi^4\otimes d\xi^4.
  \end{equation}
 Then $g^{\lambda}_k\rightarrow g, J^{\lambda}_k\rightarrow J$ as $k\rightarrow\infty$,
 where $J$ is the almost complex structure defined on the closed symplectic $4$-manifold $(M,\omega)$ in Lemma \ref{lemma a}.
 Restricted on ${|\xi|\leq\frac{1}{2k}}$,
    $$
   A_{(k,\lambda)}(\xi)= A_{\lambda}(\xi)=e^{\sin 2\pi\lambda(\xi^1+\xi^3)},\,\,\,B_{(k,\lambda)}(\xi)=B_{\lambda}(\xi)=e^{\sin 2\pi\lambda(\xi^1+\xi^4)},
  $$
  $$C_{(k,\lambda)}(\xi)=C_{\lambda}(\xi)=e^{\frac{1}{2}\sin 2\pi\lambda(\xi^1+\xi^3)-\frac{1}{2}\sin 2\pi\lambda(\xi^1+\xi^4)}$$
  and
  $$D_{(k,\lambda)}(\xi)=D_{\lambda}(\xi)=e^{-\frac{1}{2}\sin 2\pi\lambda(\xi^1+\xi^3)-\frac{1}{2}\sin 2\pi\lambda(\xi^1+\xi^4)}.$$
 It is easy to see that $A_{\lambda}(\xi), B_{\lambda}(\xi),C_{\lambda}(\xi)$ and $D_{\lambda}(\xi)$
   are periodic functions defined on $\mathbb{R}^4$ with period $1/\lambda$, where $\lambda\in[1,+\infty)$.
   Hence, as in Example \ref{example}, for any $\lambda\in[1,+\infty)$,
   we can defined an almost complex structure $J^{\lambda}$ on torus $\mathbb{T}^4$ as follows:
    \begin{equation*}
      J^{\lambda}d\xi^1=C_{\lambda}(\xi)d\xi^2,\,\,\,J^{\lambda}d\xi^2=-\frac{1}{C_{\lambda}(\xi)}d\xi^1,
  \end{equation*}
    \begin{equation}
     J^{\lambda}d\xi^3=D_{\lambda}(\xi)d\xi^4,\,\,\,J^{\lambda}d\xi^4=-\frac{1}{D_{\lambda}(\xi)}d\xi^3.
   \end{equation}
     The corresponding compatible metric is
     \begin{equation}
    g^{\lambda}\triangleq\omega_0(\cdot,J^{\lambda}\cdot)=\frac{1}{C_{\lambda}}d\xi^1\otimes d\xi^1+C_{\lambda}d\xi^2\otimes d\xi^2
    +\frac{1}{D_{\lambda}}d\xi^3\otimes d\xi^3+D_{\lambda}d\xi^4\otimes d\xi^4.
  \end{equation}
    Hence, $H^-_{J^\lambda}=\{0\}$ and
     $$
     H^+_{J^\lambda}=Span\{\omega_0,\,\,\,\alpha_0,\,\,\,\omega^\lambda_1,\,\,\,\omega^\lambda_2,\,\,\,\alpha^\lambda_1,\,\,\,\alpha^\lambda_2\},
      $$
    where $\omega_0=d\xi^1\wedge d\xi^2+d\xi^3\wedge d\xi^4$, $\alpha_0=d\xi^1\wedge d\xi^2-d\xi^3\wedge d\xi^4$,
      $$\omega^\lambda_1=(\frac{1}{1+A_{\lambda}(\xi)}-c_{A_{\lambda}})\omega_0+\frac{1}{1+A_{\lambda}(\xi)}(d\xi^1\wedge d\xi^4+A_{\lambda}(\xi)d\xi^2\wedge d\xi^3),$$
    $$\omega^\lambda_2=(\frac{1}{1+B_{\lambda}(\xi)}-c_{B_{\lambda}})\omega_0+\frac{1}{1+B_{\lambda}(\xi)}(B_{\lambda}(\xi)d\xi^1\wedge d\xi^3-d\xi^2\wedge d\xi^4),$$
    $$\alpha^\lambda_1=(\frac{1}{1+A_{\lambda}(\xi)}-c_{A_{\lambda}})\alpha_0+\frac{1}{1+A_{\lambda}(\xi)}(d\xi^1\wedge d\xi^4-A_{\lambda}(\xi)d\xi^2\wedge d\xi^3),$$
   $$\alpha^\lambda_2=(\frac{1}{1+B_{\lambda}(\xi)}-c_{B_{\lambda}})\alpha_0+\frac{1}{1+B_{\lambda}(\xi)}(B_{\lambda}(\xi)d\xi^1\wedge d\xi^3+d\xi^2\wedge d\xi^4).$$
   Here $c_{A_{\lambda}}=\int_{\mathbb{T}^4}\frac{1}{1+A_{\lambda}(\xi)}dvol$  and $c_{B_{\lambda}}=\int_{\mathbb{T}^4}\frac{1}{1+B_{\lambda}(\xi)}dvol$.
   By a conformal transformation, $\lambda\xi\rightarrow x$, on $\mathbb{T}^4$, we have the following property,
   \begin{prop}
   $\omega_0(\xi)$, $\omega^\lambda_1(\xi)$, $\omega^\lambda_2(\xi)$, $\alpha_0(\xi)$, $\alpha^\lambda_1(\xi)$, and $\alpha^\lambda_2(\xi)$
   are respectively conformally equivalent to
    $\omega_0(x)$, $\omega_1(x)$, $\omega_2(x)$, $\alpha_0(x)$, $\alpha_1(x)$, and $\alpha_2(x)$ constructed in {\rm Example \ref{example}} for all $\lambda\in[1,+\infty)$.
   \end{prop}

 Second step, for any $\lambda\in[1,+\infty)$,
  we will find out part basis $\{\tilde{\beta'}^{\lambda}_{1,k},\cdots,\tilde{\beta'}^{\lambda}_{l,k}\}$ of $H_{J^{\lambda}_k,0}^{-,\perp}$.
  By the discussion in Proposition \ref{dense prop},
  $H_{J'_k,0}^{-,\perp}=Span\{\beta'_1,\cdots,\beta'_l\}$, where
  \begin{eqnarray}
      \beta'_j &=&f'_j\omega+d^-_{J'_k}\gamma'_j\nonumber\\
      &=&f'_j\omega+P^-_{J^{\lambda}_k}d^-_{J'_k}\gamma'_j+P^+_{J^{\lambda}_k}d^-_{J'_k}\gamma'_j  \nonumber\\
       &=& f'_j\omega+P^-_{J^{\lambda}_k}d^-_{J'_k}\gamma'_j+P^-_{g^{\lambda}_k}d^-_{J'_k}\gamma'_j.
    \end{eqnarray}
     Note that
  $$\Lambda^+_{g^{\lambda}_k}=\mathbb{R}\omega\oplus\Lambda^-_{J^{\lambda}_k}\,\,\, {\rm and}
  \,\,\,\Lambda^+_{J^{\lambda}_k}=\mathbb{R}\omega\oplus\Lambda^-_{g^{\lambda}_k}.
  $$
    Since $g^{\lambda}_k|_{M\setminus\{|\xi|\leq\frac{1}{k}\}}=g'_k$ and $J^{\lambda}_k|_{M\setminus\{|\xi|\leq\frac{1}{k}\}}=J'_k$,
    we can get $P^-_{g^{\lambda}_k}d^-_{J'_k}\gamma'_j|_{M\setminus\{|\xi|\leq\frac{1}{k}\}}\equiv 0$.
    Moreover, $P^-_{g^{\lambda}_k}d^-_{J'_k}\gamma'_j\not\equiv 0$ on $\{|\xi|\leq\frac{1}{k}\}$
    and $P^-_{g^{\lambda}_k}d^-_{J'_k}\gamma'_j|_{\{|\xi|=\frac{1}{k}\}}=0$.
     Let $D''=\{|\xi|\leq\frac{1}{k}\}$.
     Due to the standard elliptic theory (cf. \cite{GT}),
     there exists a unique solution ${\eta'}^{\lambda}_{j,k}\in\Omega^-_{g_k}(D'')$ for the following Dirichlet problem
    of $g^{\lambda}_k$-anti-self-dual equations (cf. \cite{DK}):
   \begin{equation}\label{P^-_g 3}
   \left\{
  \begin{array}{ll}
  P^-_{g^{\lambda}_k}d\delta_{g^{\lambda}_k}{\eta'}^{\lambda}_{j,k}=P^-_{g^{\lambda}_k}d^-_{J'_k}\gamma'_j, & ~ {\rm on}\,\,\, D''  \\

  {\eta'}^{\lambda}_{j,k}|_{\partial D''}=0 .
  \end{array}
  \right.
  \end{equation}
  Therefore,
  $$
  \beta'_j=f'_j\omega+P^-_{J^{\lambda}_k}d^-_{J'_k}\gamma'_j+d\delta_{g^{\lambda}_k}{\eta'}^{\lambda}_{j,k}
  -d^+_{g^{\lambda}_k}\delta_{g^{\lambda}_k}{\eta'}^{\lambda}_{j,k},
  $$
  $1\leq j\leq l$.
  Denote by $\tilde{\beta'}^{\lambda}_j\triangleq \beta'_j-d\delta_{g^{\lambda}_k}{\eta'}^{\lambda}_{j,k}\in\mathcal{H}^+_{g^{\lambda}_k}$, then $[\tilde{\beta'}^{\lambda}_j]=[\beta'_j]$.
  \begin{eqnarray}
    \tilde{\beta'}^{\lambda}_j|_{M\setminus \{|\xi|\leq\frac{1}{k}\}} &=&
    (f'_j\omega+P^-_{J^{\lambda}_k}d^-_{J'_k}\gamma'_j-d^+_{g^{\lambda}_k}\delta_{g^{\lambda}_k}{\eta'}^{\lambda}_{j,k})|_{M\setminus \{|\xi|\leq\frac{1}{k}\}}\nonumber\\
    &=& \beta'_j|_{M\setminus \{|\xi|\leq\frac{1}{k}\}}  \nonumber\\
     &=& (f'_j\omega+d^-_{J'_k}\gamma'_j)|_{M\setminus \{|\xi|\leq\frac{1}{k}\}}.
  \end{eqnarray}
  So on $M\setminus \{|\xi|\leq\frac{1}{k}\}$,
  $$
  \tilde{\beta'}^{\lambda}_j\wedge\omega=\beta'_j\wedge\omega=f'_j\omega^2\not\equiv 0,
  $$
  since $f'_j|_{M\setminus \{|\xi|\leq\frac{1}{k}\}}\not\equiv 0$ by the construction above.
  Hence $\tilde{\beta'}^{\lambda}_j$ contains a non-trivial element
  $\tilde{\beta'}^{\lambda}_{j,k}\in H_{J^{\lambda}_k,0}^{-,\perp}\cap\mathcal{H}^+_{g^{\lambda}_k}(M)$.
  Note that on $M\setminus \{|\xi|\leq\frac{1}{k}\}$,
   $J^{\lambda}_k=J'_k$.
  So when restricted to $M\setminus \{|\xi|\leq\frac{1}{k}\}$,
  we will get
  $\tilde{\beta'}^{\lambda}_{j,k}=\tilde{\beta'}_j=\beta'_j$.
  This implies that $\{\tilde{\beta'}^{\lambda}_{1,k},\cdots,\tilde{\beta'}^{\lambda}_{l,k}\}$ are linearly independent.

   Third step, for some $\lambda\in[1,+\infty)$,
    we will construct another element $\tilde{\beta'}^{\lambda}_{l+1,k}$ in $H_{J^{\lambda}_k,0}^{-,\perp}\cap\mathcal{H}^+_{g^{\lambda}_k}(M)$ which is independent with $\{\tilde{\beta'}^{\lambda}_{1,k},\cdots,\tilde{\beta'}^{\lambda}_{l,k}\}$.
    By the discussion in the first step, $\alpha'_m\in H^-_{J'_k}(M)$ and
  $\alpha'_m|_{|\xi|\leq\frac{1}{k}}=d\xi^1\wedge d\xi^3-d\xi^2\wedge d\xi^4$.
    Hence, on $M\setminus \{|\xi|\leq\frac{1}{k}\}$,
        $P^+_{g^{\lambda}_k}\alpha'_m=\alpha'_m$ since $g^{\lambda}_k|_{M\setminus\{|\xi|\leq\frac{1}{k}\}}=g'_k|_{M\setminus\{|\xi|\leq\frac{1}{k}\}}$.
    On $\{|\xi|\leq\frac{1}{k}\}$,
      \begin{eqnarray}\label{proj1}
        P^+_{g^{\lambda}_k}\alpha'_m &=& \frac{1}{2}\{\alpha'_m-J^{\lambda}_k\alpha'_m\} \nonumber\\
         &=& \frac{1}{2}\{(1+B_{(k,\lambda)}(\xi))d\xi^1\wedge d\xi^3-(1+\frac{1}{B_{(k,\lambda)}(\xi)})d\xi^2\wedge d\xi^4\}.
      \end{eqnarray}
  Similarly, we have
  \begin{equation}
    P^-_{g^{\lambda}_k}\alpha'_m|_{M\setminus\{|\xi|\leq\frac{1}{k}\}}\equiv 0
  \end{equation}
   and
     \begin{equation}
    P^-_{g^{\lambda}_k}\alpha'_m|_{|\xi|\leq\frac{1}{k}}
    =\frac{1}{2}\{(1-B_{(k,\lambda)}(\xi))d\xi^1\wedge d\xi^3-(1-\frac{1}{B_{(k,\lambda)}(\xi)})d\xi^2\wedge d\xi^4\}
  \end{equation}
    By Hodge theory (cf. \cite{DK}),
    there exists a unique $\zeta_{(k,\lambda)}(\xi)\in\Omega^-_{g^{\lambda}_k}(\{|\xi|\leq\frac{1}{k}\})$
    satisfying the Dirichlet problem of $g^{\lambda}_k$-anti-self-dual equations:
    \begin{equation}\label{proj4}
   \left\{
  \begin{array}{ll}
  P^-_{g^{\lambda}_k}d\delta_{g^{\lambda}_k}\zeta_{(k,\lambda)}(\xi)=\frac{1}{2}\{(1-B_{(k,\lambda)}(\xi))d\xi^1\wedge d\xi^3-(1-\frac{1}{B_{(k,\lambda)}(\xi)})d\xi^2\wedge d\xi^4\}  \\

  \zeta_{(k,\lambda)}(\xi)|_{|\xi|=\frac{1}{k}}=0 .
  \end{array}
  \right.
  \end{equation}
   Hence, let $\beta_{(k,\lambda)}(\xi)=\delta_{g^{\lambda}_k}\zeta_{(k,\lambda)}(\xi)$, by (\ref{proj1})-(\ref{proj4}),
   we have \begin{eqnarray}
             \tilde{\beta'}^{\lambda}_{l+1} &\triangleq& \alpha'_m-d\beta_{(k,\lambda)}(\xi) \nonumber\\
              &=&    P^+_{g^{\lambda}_k}\alpha'_m-d^+_{g^{\lambda}_k}\beta_{(k,\lambda)}(\xi)\in\Omega^+_{g^{\lambda}_k}(M)
           \end{eqnarray}
    is a $d$-closed $g^{\lambda}_k$-self-dual $2$-form which is cohomologous to $\alpha'_m$, that is, $[\tilde{\beta'}^{\lambda}_{l+1}]=[\alpha'_m]$.
    Therefore, $\tilde{\beta'}^{\lambda}_{l+1}\in\mathcal{H}^+_{g^{\lambda}_k}(M)$ for any $\lambda\in[1,+\infty)$.
     By (\ref{proj1}), it is easy to see that $P^+_{g^{\lambda}_k}\alpha'_m\wedge\omega\equiv0$.
     Note that $\omega|_{|\xi|\leq\frac{1}{k}}=\omega_0$ and $d^+_{g^{\lambda}_k}\beta_{(k,\lambda)}|_{M\setminus\{|\xi|\leq\frac{1}{k}\}}\equiv 0$.
     If $d^+_{g^{\lambda}_k}\beta_{(k,\lambda)}\wedge\omega_0\not\equiv 0$ on $\{|\xi|\leq\frac{1}{k}\}$,
     then $\tilde{\beta'}^{\lambda}_{l+1}$ contains an element in $H^{-,\perp}_{J^{\lambda}_k,0}$.
     To complete the proof of Lemma \ref{lemma a}, we need the following claim,
    \begin{cla}\label{claim}
    Fixed $k\in\mathbb{N}$, then there exists some $\lambda_0\in[1,+\infty)$ such that
    $$d^+_{g^{\lambda_0}_k}\beta_{(k,\lambda_0)}\wedge\omega_0\not\equiv 0$$ on $\{|\xi|\leq\frac{1}{k}\}$.
    \end{cla}

    The above claim will be proved later.

   Now, let us return to the proof of Lemma \ref{lemma a}.
   With Claim \ref{claim}, we can find a $\lambda_0\in[1,+\infty)$ such that $d^+_{g^{\lambda_0}_k}\beta_{(k,\lambda_0)}\wedge\omega_0\not\equiv 0$.
   Then there is a point $q_1\in\{|\xi|\leq\frac{1}{k}\}$ such that $\tilde{\beta'}^{\lambda_0}_{l+1}\wedge\omega_0|_{q_1}\neq 0$.
   It implies that $\tilde{\beta'}^{\lambda_0}_{l+1}\wedge\omega\neq 0$ on a small neighborhood of $q_1$ in $\{|\xi|\leq\frac{1}{k}\}$.
  Thus, $\tilde{\beta'}^{\lambda_0}_{l+1}$ contains a non-trivial element $\tilde{\beta'}^{\lambda_0}_{l+1,k}$ in $H^{-,\perp}_{J^{\lambda_0}_k,0}$.
   Note that $\tilde{\beta'}^{\lambda_0}_{l+1}=\tilde{f}'_{l+1}\omega+d^-_{J^{\lambda_0}_k}\tilde{\gamma}'_{l+1}$,
   where ${\rm supp}\tilde{f}'_{l+1}\subset U_p$ (defined in the first step of the proof of Proposition \ref{dense prop}) and ${\rm vol(supp}\tilde{f}'_{l+1})<\frac{\mu_J}{3}$.
   Then $\{\tilde{\beta'}^{\lambda_0}_{1,k},\cdots,\tilde{\beta'}^{\lambda_0}_{l,k},\tilde{\beta'}^{\lambda_0}_{l+1,k}\}$ are linearly independent.
    Let $J_k=J^{\lambda_0}_k$, $g_k=g^{\lambda_0}_k$, and $\tilde{\beta'}_{l,k}=\tilde{\beta'}^{\lambda_0}_{l,k}$, $1\leq j\leq l+1$.
    Therefor, $\{\tilde{\beta'}_{1,k},\cdots,\tilde{\beta'}_{l,k},\tilde{\beta'}_{l+1,k}\}$ is a part of $H^{-,\perp}_{J_k,0}$.
   This completes the proof of Lemma \ref{lemma a}.

  In the remainder section, we will give the proof of Claim \ref{claim}.

   \vskip 6pt

   \noindent {\bf Proof of Claim \ref{claim}.}
     Fix $k\in\mathbb{N}$ and suppose that
     \begin{equation}\label{supp1}
       d^+_{g^{\lambda}_k}\beta_{(k,\lambda)}\wedge\omega_0\equiv 0
     \end{equation}
      on $\{|\xi|\leq\frac{1}{k}\}$ for all $\lambda\in[1,+\infty)$.
    Here, $\beta_{(k,\lambda)}(\xi)=\delta_{g^{\lambda}_k}\zeta_{(k,\lambda)}(\xi)$
    satisfying Dirichlet problem for $g^{\lambda}_k$-anti-self-dual equations (cf. \cite{DK}):
    \begin{equation}\label{supp2}
   \left\{
  \begin{array}{ll}
  P^-_{g^{\lambda}_k}d\beta_{(k,\lambda)}(\xi)= P^-_{g^{\lambda}_k}\alpha'_m \,\,\,\,\,\,{\rm on}\,\,\,\{|\xi|<\frac{1}{k}\},\\

  \zeta_{(k,\lambda)}(\xi)|_{|\xi|=\frac{1}{k}}=0 .
  \end{array}
  \right.
  \end{equation}
  Using gluing operation (cf. \cite{Taubes,Wang}) and replacing ${M\setminus \{|\xi|\leq\frac{1}{k}\}}$ by ${\mathbb{R}^4\setminus \{|\xi|\leq\frac{1}{k}\}}$,
  define an almost complex structure $\tilde{J}^{\lambda}_k$ on $\mathbb{R}^4$ as follows
   $$
  \tilde{J}^{\lambda}_k|_{|\xi|\leq\frac{1}{k}}=J^{\lambda}_k,\,\,\, \tilde{J}^{\lambda}_k|_{\mathbb{R}^4\setminus \{|\xi|\leq\frac{1}{k}\}}=J_0,
   $$
   where $J^{\lambda}_k$ is defined by (\ref{J_k})
   and $J_0$ is the standard complex structure on $\mathbb{R}^4\cong\mathbb{C}^2$.
 Then, on $\mathbb{R}^4$, the almost complex structure $\tilde{J}^{\lambda}_k$ is given by
   \begin{equation*}
      \tilde{J}^{\lambda}_kd\xi^1=C_{(k,\lambda)}(\xi)d\xi^2,\,\,\,\tilde{J}^{\lambda}_kd\xi^2=-\frac{1}{C_{(k,\lambda)}(\xi)}d\xi^1,
  \end{equation*}
    \begin{equation}
     \tilde{J}^{\lambda}_kd\xi^3=D_{(k,\lambda)}(\xi)d\xi^4,\,\,\,\tilde{J}^{\lambda}_kd\xi^4=-\frac{1}{D_{(k,\lambda)}(\xi)}d\xi^3.
   \end{equation}
 Moreover, we can define the corresponding almost K\"{a}hler triples $(\tilde{J}^{\lambda}_k,\omega_0, \tilde{g}^{\lambda}_k)$ on $\mathbb{R}^4$,
   where $\omega_0=d\xi^1\wedge d\xi^2+d\xi^3\wedge d\xi^4$ and
   \begin{eqnarray}
     \tilde{g}^{\lambda}_k &\triangleq& \omega_0(\cdot,\tilde{J}^{\lambda}_k\cdot) \nonumber\\
      &=& \frac{1}{C_{(k,\lambda)}}d\xi^1\otimes d\xi^1+C_{(k,\lambda)}d\xi^2\otimes d\xi^2
    +\frac{1}{D_{(k,\lambda)}}d\xi^3\otimes d\xi^3+D_{(k,\lambda)}d\xi^4\otimes d\xi^4.\nonumber\\
    &~&
   \end{eqnarray}
   It is easy to see that
  \begin{eqnarray*}
    \Lambda^+_{\tilde{J}^{\lambda}_k} &=& Span\{d\xi^1\wedge d\xi^2+dx^3\wedge d\xi^4,d\xi^1\wedge d\xi^2-d\xi^3\wedge d\xi^4, \\
                & & B_{(k,\lambda)}d\xi^1\wedge d\xi^3+d\xi^2\wedge d\xi^4,d\xi^1\wedge d\xi^4-A_{(k,\lambda)}d\xi^2\wedge d\xi^3\}
  \end{eqnarray*}
  and
  $$
  \Lambda^-_{\tilde{J}^{\lambda}_k}=Span\{d\xi^1\wedge d\xi^4+A_{(k,\lambda)}d\xi^2\wedge d\xi^3,B_{(k,\lambda)}d\xi^1\wedge d\xi^3-d\xi^2\wedge d\xi^4\}.
  $$
  By direct calculation, we get
  \begin{eqnarray}
    P^+_{\tilde{g}^{\lambda}_k}(d\xi^1\wedge d\xi^3-d\xi^2\wedge d\xi^4) &=&\frac{1}{2}\{(1+B_{(k,\lambda)}(\xi))d\xi^1\wedge d\xi^3 \nonumber \\
     &~& -(1+\frac{1}{B_{(k,\lambda)}(\xi)})d\xi^2\wedge d\xi^4\}
  \end{eqnarray}
    and
   \begin{eqnarray}\label{pro g}
    P^-_{\tilde{g}^{\lambda}_k}(d\xi^1\wedge d\xi^3-d\xi^2\wedge d\xi^4)&=& \frac{1}{2}\{(1-B_{(k,\lambda)}(\xi))d\xi^1\wedge d\xi^3 \nonumber \\
    &~& -(1-\frac{1}{B_{(k,\lambda)}(\xi)})d\xi^2\wedge d\xi^4\}.
 \end{eqnarray}


  Make a conformal transformation on $\mathbb{R}^4$: $\xi\rightarrow\frac{x}{\lambda}$.
  Then $A_{(k,\lambda)}(\xi)$, $B_{(k,\lambda)}(\xi)$, $C_{(k,\lambda)}(\xi)$, $D_{(k,\lambda)}(\xi)$ transform correspondingly into
  $$
   A_{(k,\lambda)}(x)=e^{\phi_k(\frac{x}{\lambda})\sin 2\pi(x^1+x^3)},\,\,\,B_{(k,\lambda)}(x)=e^{\phi_k(\frac{x}{\lambda})\sin 2\pi(x^1+x^4)},
  $$
  $$C_{(k,\lambda)}(x)=e^{\frac{\phi_k(\frac{x}{\lambda})}{2}\sin 2\pi(x^1+x^3)-\frac{\phi_k(\frac{x}{\lambda})}{2}\sin 2\pi(x^1+x^4)},$$
  and
  $$D_{(k,\lambda)}(x)=e^{-\frac{\phi_k(\frac{x}{\lambda})}{2}\sin 2\pi(x^1+x^3)-\frac{\phi_k(\frac{x}{\lambda})}{2}\sin 2\pi(x^1+x^4)}.$$
  In particular,
    $$
   A_{(k,\lambda)}(x)|_{|x|\leq\frac{\lambda}{2k}}=e^{\sin 2\pi(x^1+x^3)}=A,\,\,\,A_{(k,\lambda)}(x)|_{\mathbb{R}^4\setminus \{|x|\leq\frac{\lambda}{k}\}}=1;
  $$
  $$
  B_{(k,\lambda)}(x)|_{|x|\leq\frac{\lambda}{2k}}=e^{\sin 2\pi(x^1+x^4)}=B,\,\,\,B_{(k,\lambda)}(x)|_{\mathbb{R}^4\setminus \{|x|\leq\frac{\lambda}{k}\}}=1;
  $$
  $$
  C_{(k,\lambda)}(x)|_{|x|\leq\frac{\lambda}{2k}}=e^{\frac{1}{2}\sin 2\pi(x^1+x^3)-\frac{1}{2}\sin 2\pi(x^1+x^4)}=C
  ,\,\,\,C_{(k,\lambda)}(x)|_{\mathbb{R}^4\setminus \{|x|\leq\frac{\lambda}{k}\}}=1;
  $$
  \begin{equation}\label{3.27}
     D_{(k,\lambda)}(x)|_{|x|\leq\frac{\lambda}{2k}}=e^{-\frac{1}{2}\sin 2\pi(x^1+x^3)-\frac{1}{2}\sin 2\pi(x^1+x^4)}=D,\,\,\,
  D_{(k,\lambda)}(x)|_{\mathbb{R}^4\setminus \{|x|\leq\frac{\lambda}{k}\}}=1.
  \end{equation}
  Here $A,B,C$ and $D$ are defined in Example \ref{example}.
  Therefore,
   $$
  \tilde{J}^{\lambda}_k|_{|x|\leq\frac{\lambda}{2k}}=J,\,\,\, \tilde{J}^{\lambda}_k|_{\mathbb{R}^4\setminus \{|x|\leq\frac{\lambda}{k}\}}=J_0,
   $$
   where $J$ is the almost complex structure on $\mathbb{R}^4$ defined in Example \ref{example}
   and $J_0$ is the standard complex structure
   on  $\mathbb{R}^4\cong\mathbb{C}^2$.
 In the new coordinate system,
  $\omega_0=\frac{1}{\lambda^2}(dx^1\wedge dx^2+dx^3\wedge dx^4)$ and
     \begin{equation}
    \tilde{g}^{\lambda}_k=\frac{1}{\lambda^2}(\frac{1}{C_{(k,\lambda)}}dx^1\otimes dx^1+C_{(k,\lambda)}dx^2\otimes dx^2
    +\frac{1}{D_{(k,\lambda)}}dx^3\otimes dx^3+D_{(k,\lambda)}dx^4\otimes dx^4).
  \end{equation}
 Define $g_{(k,\lambda)}\triangleq\lambda^2\tilde{g}^{\lambda}_k$ and
  still denote $dx^1\wedge dx^2+dx^3\wedge dx^4$ by $\omega_0$.
  The almost complex structure determined by $g_{(k,\lambda)}$ and $\omega_0$ is denoted by $J_{(k,\lambda)}$.
  Actually, $J_{(k,\lambda)}=\tilde{J}^{\lambda}_k$.
  Denote by $\Omega^-_{g_{(k,\lambda)}}(\mathbb{R}^4)_0$ the smooth space of $g_{(k,\lambda)}$-anti-self-dual $2$-forms on $\mathbb{R}^4$ with compact support.
    By the elliptic theory (cf. \cite{GT}),
     there exists a unique solution $\zeta_{(k,\lambda)}(x)\in\Omega^-_{g_{(k,\lambda)}}(\mathbb{R}^4)_0$ satisfying
     the following Dirichlet problem of $g_{(k,\lambda)}$-anti-self-dual equations (cf. \cite{DK}):
   \begin{equation}\label{g k}
   \left\{
  \begin{array}{ll}
  P^-_{g_{(k,\lambda)}}d\delta_{g_{(k,\lambda)}}\zeta_{(k,\lambda)}(x)
  =P^-_{g_{(k,\lambda)}}(dx^1\wedge dx^3-dx^2\wedge dx^4),  \\

  \zeta_{(k,\lambda)}(x)|_{|x|=\frac{\lambda}{k}}=0 .
  \end{array}
  \right.
  \end{equation}
    Then
  \begin{equation}
    P^+_{g_{(k,\lambda)}}(dx^1\wedge dx^3-dx^2\wedge dx^4)-d^+_{g_{(k,\lambda)}}\beta_{(k,\lambda)}
     =dx^1\wedge dx^3-dx^2\wedge dx^4-d\beta_{(k,\lambda)}
  \end{equation}
  is a $d$-closed $2$-form, where $\beta_{(k,\lambda)}=\delta_{g_{(k,\lambda)}}\zeta_{(k,\lambda)}$
  and $\beta_{(k,\lambda)}|_{\mathbb{R}^4\setminus \{|x|\leq\frac{\lambda}{k}\}}\equiv 0$.
    By (\ref{supp1}),
    \begin{equation}\label{supp6}
       d^+_{g_{(k,\lambda)}}\beta_{(k,\lambda)}\wedge\omega_0\equiv 0
     \end{equation}
      on $\mathbb{R}^4$ for all $\lambda\in[1,+\infty)$.

      As $\lambda\rightarrow+\infty$, by (\ref{3.27}),
    $A_{(k,\lambda)}(x)$, $B_{(k,\lambda)}(x)$, $C_{(k,\lambda)}(x)$, and $D_{(k,\lambda)}(x)$
    are converging to $A$, $B$, $C$, and $D$ (which are constructed in Example \ref{example}) respectively in the weak (compact-open)
   topology (cf. \cite[Chapter 2]{Hirsch}) .
   Hence, $J_{(k,\lambda)}\rightarrow J^{\infty}$, $g_{(k,\lambda)}\rightarrow g^{\infty}$,
   and $P^{\pm}_{g_{(k,\lambda)}}\rightarrow P^{\pm}_{g^{\infty}}$ in the weak (compact-open)
   topology as $\lambda\rightarrow+\infty$,
   where $J^{\infty}$ and $g^{\infty}$ are the same to the almost complex structure $J$ and the metric $g$ on $\mathbb{R}^4$ in Example \ref{example}.
   Hence, $J^{\infty}$ and $g^{\infty}$ are periodic structures on $\mathbb{R}^4$.

   By (\ref{projection 2}), if $P^-_{g^{\infty}}(dx^1\wedge dx^3-dx^2\wedge dx^4)$ is viewed as a $g^{\infty}$-anti-self-dual $2$-form on $\mathbb{T}^4$,
   then we have the following Hodge decomposition
   $P^-_{g^{\infty}}(dx^1\wedge dx^3-dx^2\wedge dx^4)=\frac{a}{\parallel\alpha_2\parallel^2_{L^2(\mathbb{T}^4,g)}}\alpha_2+d^-_{g^{\infty}}\gamma_2$,
   where $\alpha_2\in\mathcal{H}^-_{g^\infty}(\mathbb{T}^4)$ and $\gamma_2\in\Omega^1(\mathbb{T}^4)$.
   However, $\gamma_2$ can be viewed as a periodic $1$-form and $\alpha_2$ a periodic $g^{\infty}$-anti-self-dual $d$-closed $2$-form on $\mathbb{R}^4$.
   Since $\mathbb{R}^4$ is contractible, by Poincar\'{e} Lemma, there exists a $1$-form $\gamma_{\infty}$ on $\mathbb{R}^4$ such that
   \begin{equation}\label{contractible}
    \frac{a}{\parallel\alpha_2\parallel^2_{L^2(\mathbb{T}^4,g)}}\alpha_2=d\gamma_{\infty}=d^-_{g^{\infty}}\gamma_{\infty}.
   \end{equation}
   Thus, $P^-_{g^{\infty}}(dx^1\wedge dx^3-dx^2\wedge dx^4)=d^-_{g^{\infty}}(\gamma_{\infty}+\gamma_2)$.
    By (\ref{g k}),
    $d^-_{g_{(k,\lambda)}}\beta_{(k,\lambda)}=P^-_{g_{(k,\lambda)}}(dx^1\wedge dx^3-dx^2\wedge dx^4)$
    and $P^-_{g_{(k,\lambda)}}(dx^1\wedge dx^3-dx^2\wedge dx^4)|_{|x|\leq\frac{\lambda}{2k}}=P^-_{g^{\infty}}(dx^1\wedge dx^3-dx^2\wedge dx^4)$,
    so $d^-_{g_{(k,\lambda)}}\beta_{(k,\lambda)}|_{|x|\leq\frac{\lambda}{2k}}=d^-_{g^{\infty}}(\gamma_{\infty}+\gamma_2)$ for any $\lambda\in[1,+\infty)$.
    Then $d^-_{g_{(k,\lambda)}}\beta_{(k,\lambda)}\rightarrow d^-_{g^{\infty}}(\gamma_{\infty}+\gamma_2)$ as $\lambda\rightarrow +\infty$
    in weak topology on $\Omega^2(\mathbb{R}^4)$.
    By (\ref{contractible}),
    $d^+_{g_{(k,\lambda)}}\beta_{(k,\lambda)}\rightarrow d^+_{g^{\infty}}(\gamma_{\infty}+\gamma_2)=d^+_{g^{\infty}}\gamma_2$ as $\lambda\rightarrow +\infty$
    in weak topology on $\Omega^2(\mathbb{R}^4)$.
  Let $\beta_{\infty}\triangleq\gamma_{\infty}+\gamma_2$.
    By (\ref{supp6}), we get
    \begin{equation}\label{supp7}
       d^+_{g^{\infty}}\beta_{\infty}\wedge\omega_0=d^+_{g^{\infty}}\gamma_2\wedge\omega_0\equiv 0
     \end{equation}
      on $\mathbb{R}^4$.
 Note that
 $$ dx^1\wedge dx^3-dx^2\wedge dx^4 \qquad \qquad \qquad \qquad \qquad \qquad \qquad \qquad \qquad \qquad$$
 $$ \qquad \qquad=P^+_{g_{(k,\lambda)}}(dx^1\wedge dx^3-dx^2\wedge dx^4)+P^-_{g_{(k,\lambda)}}(dx^1\wedge dx^3-dx^2\wedge dx^4)$$
 $$ \qquad \qquad=P^+_{g^{\infty}}(dx^1\wedge dx^3-dx^2\wedge dx^4)+P^-_{g^{\infty}}(dx^1\wedge dx^3-dx^2\wedge dx^4). $$
 Then
 $$
 P^+_{g_{(k,\lambda)}}(dx^1\wedge dx^3-dx^2\wedge dx^4)\rightarrow P^+_{g^{\infty}}(dx^1\wedge dx^3-dx^2\wedge dx^4)
 $$
 and
 $$
 P^-_{g_{(k,\lambda)}}(dx^1\wedge dx^3-dx^2\wedge dx^4)\rightarrow P^-_{g^{\infty}}(dx^1\wedge dx^3-dx^2\wedge dx^4)
 $$
 as $\lambda\rightarrow +\infty$ in weak topology .
  Since $P^+_{g^{\infty}}(dx^1\wedge dx^3-dx^2\wedge dx^4)$ can be regarded as $g^{\infty}$-sefl-dual $2$-form on $\mathbb{T}^4$,
   by (\ref{projection 1}),
    $P^+_{g^{\infty}}(dx^1\wedge dx^3-dx^2\wedge dx^4)=\frac{1}{\parallel\omega_2\parallel^2_{L^2(\mathbb{T}^4,g)}}\omega_2+d^+_{g^{\infty}}\gamma_1$.
 Since $P^+_{g_{(k,\lambda)}}(dx^1\wedge dx^3-dx^2\wedge dx^4)-d^+_{g_{(k,\lambda)}}\beta_{(k,\lambda)}$ is $d$-closed,
 \begin{eqnarray*}
   P^+_{g^{\infty}}(dx^1\wedge dx^3-dx^2\wedge dx^4)-d^+_{g^{\infty}}\beta_{\infty} &=& \frac{1}{\parallel\omega_2\parallel^2_{L^2(\mathbb{T}^4,g)}}\omega_2+d^+_{g^{\infty}}\gamma_1-d^+_{g^{\infty}}\beta_{\infty} \\
    &=&   \frac{1}{\parallel\omega_2\parallel^2_{L^2(\mathbb{T}^4,g)}}\omega_2+d^+_{g^{\infty}}\gamma_1-d^+_{g^{\infty}}\gamma_2
 \end{eqnarray*}
  is a $d$-closed $g^\infty$-self-dual $2$-form on $\mathbb{T}^4$.
  Moreover, we can get
   $d(d^+_{g^{\infty}}\gamma_1-d^+_{g^{\infty}}\gamma_2)=0$
   and
  $d^+_{g^{\infty}}\gamma_1=d^+_{g^{\infty}}\gamma_2$.
  By (\ref{equ +}), $d^+_{g^{\infty}}\gamma_1\wedge\omega_0\neq 0$,
   then $d^+_{g^{\infty}}\beta_{\infty}\wedge\omega_0=d^+_{g^{\infty}}\gamma_2\wedge\omega_0\neq 0$.
  This gives a contradiction to (\ref{supp7}).
   Hence, for any fixed $k\in\mathbb{N}$, there exists some $\lambda_0\in[1,+\infty)$ such that
   $$d^+_{g^{\lambda_0}_k}\beta_{(k,\lambda_0)}\wedge\omega_0\not\equiv 0$$ on $\{|\xi|\leq\frac{1}{k}\}$.
  This completes the proof of Claim \ref{claim}.


 \vskip 24pt

 \noindent Qiang Tan\\
 School of Mathematics and Statistics, Wuhan University, Wuhan, Hubei 430072, China
 (Current address: Faculty of Science, Jiangsu University, Zhenjiang, Jiangsu 212013, China)\\
 e-mail: tanqiang1986@hotmail.com\\

 \vskip 6pt

 \noindent Hongyu Wang\\
 School of Mathematical Sciences, Yangzhou University, Yangzhou, Jiangsu 225002, China\\
 e-mail: hywang@yzu.edu.cn\\

 \vskip 6pt

 \noindent Jiuru Zhou\\
 School of Mathematical Sciences, Yangzhou University, Yangzhou, Jiangsu 225002, China\\
 e-mail: zhoujr1982@hotmail.com

 \end{document}